\let\OLDthebibliography\thebibliography
\renewcommand\thebibliography[1]{
 \OLDthebibliography{#1}
 \setlength{\parskip}{1.4pt}
 \setlength{\itemsep}{0pt plus 0.2ex}
}
\newtheorem{theorem}{Theorem}[section]
\newtheorem{proposition}[theorem]{Proposition}
\newtheorem{corollary}[theorem]{Corollary}
\newtheorem{lemma}[theorem]{Lemma}
\newtheorem{definition}[theorem]{Definition}
\newtheorem{example}[theorem]{Example}
\newtheorem{remark}[theorem]{Remark}
\newcommand{\Mdef}[2]{\newcommand{#1}{{#2}}}
\Mdef{\bA}{\mathbb{A}}
\Mdef{\bB}{\mathbb{B}}
\Mdef{\bC}{\mathbb{C}}
\Mdef{\bD}{\mathbb{D}}
\Mdef{\bE}{\mathbb{E}}
\Mdef{\bF}{\mathbb{F}}
\Mdef{\bG}{\mathbb{G}}
\Mdef{\bH}{\mathbb{H}}
\Mdef{\bI}{\mathbb{I}}
\Mdef{\bJ}{\mathbb{J}}
\Mdef{\bK}{\mathbb{K}}
\Mdef{\bL}{\mathbb{L}}
\Mdef{\bM}{\mathbb{M}}
\Mdef{\bN}{\mathbb{N}}
\Mdef{\bO}{\mathbb{O}}
\Mdef{\bP}{\mathbb{P}}
\Mdef{\bQ}{\mathbb{Q}}
\Mdef{\bR}{\mathbb{R}}
\Mdef{\bS}{\mathbb{S}}
\Mdef{\bT}{\mathbb{T}}
\Mdef{\bU}{\mathbb{U}}
\Mdef{\bV}{\mathbb{V}}
\Mdef{\bW}{\mathbb{W}}
\Mdef{\bX}{\mathbb{X}}
\Mdef{\bY}{\mathbb{Y}}
\Mdef{\bZ}{\mathbb{Z}}
\Mdef{\cL}{\mathcal{L}}
\Mdef{\cF}{\mathcal{F}}
\Mdef{\mcA}{\mathcal{A}}
\Mdef{\mcB}{\mathcal{B}}
\Mdef{\mcC}{\mathcal{C}}
\Mdef{\mcD}{\mathcal{D}} 
\Mdef{\mcE}{\mathcal{E}}
\Mdef{\mcF}{\mathcal{F}}
\Mdef{\mcG}{\mathcal{G}}
\Mdef{\mcH}{\mathcal{H}} 
\Mdef{\mcI}{\mathcal{I}}
\Mdef{\mcJ}{\mathcal{J}}
\Mdef{\mcK}{\mathcal{K}}
\Mdef{\mcL}{\mathcal{L}}
\Mdef{\mcM}{\mathcal{M}}
\Mdef{\mcN}{\mathcal{N}}
\Mdef{\mcO}{\mathcal{O}}
\Mdef{\mcP}{\mathcal{P}}
\Mdef{\mcQ}{\mathcal{Q}}
\Mdef{\mcR}{\mathcal{R}}
\Mdef{\mcS}{\mathcal{S}}
\Mdef{\mcT}{\mathcal{T}}
\Mdef{\mcU}{\mathcal{U}}
\Mdef{\mcV}{\mathcal{V}}
\Mdef{\mcW}{\mathcal{W}}
\Mdef{\mcX}{\mathcal{X}}
\Mdef{\mcY}{\mathcal{Y}}
\Mdef{\mcZ}{\mathcal{Z}}
\Mdef{\At}{\tilde{A}}
\Mdef{\Bt}{\tilde{B}}
\Mdef{\Ct}{\tilde{C}}
\Mdef{\Et}{\tilde{E}}
\Mdef{\Ht}{\tilde{H}}
\Mdef{\Kt}{\tilde{K}}
\Mdef{\Lt}{\tilde{L}}
\Mdef{\Mt}{\tilde{M}}
\Mdef{\Nt}{\tilde{N}}
\Mdef{\Pt}{\tilde{P}}
\def\endash{\mathchar"707B}
\newcommand{\leftmod}{\endash \textnormal{mod}}
\newcommand{\Einfty}{\textnormal{E}_\infty}
\newcommand{\einf}{\textnormal{E}_\infty}
\newcommand{\eginf}{\textnormal{E}^G_\infty}
\newcommand{\co}{\colon \!}
\newcommand{\ho}{\textnormal{Ho}}
\newcommand{\Ig}{\textnormal{Ig}}
\newcommand{\smashprod}{\wedge}
\newcommand{\tensor}{\otimes}
\newcommand{\adjunction}[4]{
\xymatrix{
#1:#2 \ar@<0.7ex>[r] &
\ar@<0.7ex>[l] #3:#4
}}
\newcommand{\lra}{\longrightarrow}
\newcommand{\spO}{\textnormal{Sp}}
\newcommand{\spS}{\textnormal{Sp}^\Sigma}
\Mdef{\infl}{\mathrm{inf}}
\Mdef{\defl}{\mathrm{def}}
\Mdef{\res}{\mathrm{res}}
\Mdef{\ind}{\mathrm{ind}}
\Mdef{\coind}{\mathrm{coind}}
\Mdef{\Top}{\mathsf{Top}}
\Mdef{\sset}{\mathsf{sSet}}
\Mdef{\Comm}{\mathsf{Comm}}
\newcommand{\algin}[2]{#1 \endash \textnormal{alg-in}\endash #2}
\newcommand{\mcOalgbbC}{\algin{\mcO}{\bC}}
\newcommand{\mcOalgbbD}{\algin{\mcO}{\bD}}
\newcommand{\st}{\; |\;}
\newcommand{\Ch}{\mathsf{Ch}}
\definecolor{darkgreen}{RGB}{85,107,47}
\newcommand{\GSp}{G \endash \spO}
\newcommand{\NSp}{N \endash \spO}
\newcommand{\SnSp}{\Sn \endash \spO}
\newcommand{\GSn}{G\times \Sigma_n}
\newcommand{\GSnSp}{(G\times \Sigma_n) \endash \spO}
\newcommand{\Sn}{\Sigma_n}
\newcommand{\FGSn}{\mathcal{F}_G(\Sn)}
\newcommand{\FSn}{\mathcal{F}_1(\Sn)}
\newcommand{\GI}{\Ig}
\newcommand{\sm}{\wedge}
\newcommand{\Ninfty}{N_{\infty}}
\title{An algebraic model for rational naive-commutative equivariant ring spectra}
\author[Barnes]{David Barnes}
\address[Barnes]{Mathematical Sciences Research Centre, Queen's University Belfast}
\email{d.barnes@qub.ac.uk}
\author[Greenlees]{J.P.C. Greenlees}
\address[Greenlees]{School of Mathematics and Statistics, Hicks
  Building, Sheffield, S3 7RH, UK}
\email{j.greenlees@sheffield.ac.uk}
\author[K\c{e}dziorek]{Magdalena K\c{e}dziorek}
\address[K\c{e}dziorek]{SV UPHESS BMI, \'Ecole Polytechnique F\'ed\'erale de Lausanne}
\email{magdalena.kedziorek@epfl.ch}
\begin{document}

\begin{abstract}
Equipping a non-equivariant topological $E_\infty$ operad with the trivial $G$--action gives 
an operad in $G$--spaces. The algebra structure encoded by this operad in $G$--spectra is characterised homotopically by 
having no non-trivial multiplicative norms. Algebras over this operad are called 
na\"ive--commutative ring $G$--spectra.  
In this paper we let $G$ be a finite group and we
show that commutative algebras in 
the algebraic model for rational $G$--spectra model 
the rational na\"ive--commutative ring $G$--spectra. 
\end{abstract}

\maketitle

\tableofcontents

\section{Introduction}

We are interested in the category of $G$--spectra, where $G$ is a finite group and 
the indexing universe (unless specifically stated otherwise) 
is a complete $G$--universe $U$. 
This is a model for the equivariant stable homotopy category 
where all $G$--representation spheres are invertible. 
Building on work of Greenlees and May \cite[Appendix A]{gremay95} and Barnes \cite{barnesfinite}, K\c{e}dziorek \cite{KedziorekExceptional} gave a symmetric monoidal algebraic model for the category of rational $G$--spectra, when $G$ is a finite group. 
As a consequence, one obtains a model for rational ring $G$--spectra in terms of 
monoids in the algebraic model. However, this does not imply results
about strict commutative rational ring $G$--spectra ($\Comm$-algebras). This is because of 
the well-known but surprising result that \emph{symmetric} 
monoidal Quillen functors can fail to preserve \emph{commutative} 
monoids (algebras for an operad $\Comm$) in the equivariant setting.

Recent work of Blumberg and Hill \cite{BlumbergHillNorms}
describes a class of commutative multiplicative structures on the
equivariant stable homotopy category. These multiplicative structures are governed by $G$--operads called $\Ninfty$--operads. Roughly speaking, a multiplicative structure is characterised by the set of Hill-Hopkins-Ravenel norms which exist on the commutative algebras corresponding to it.

This work of Blumberg and Hill raises a question: which level of commutative ring $G$--spectra is modelled by commutative algebras in the algebraic model of \cite{KedziorekExceptional}?

\subsubsection*{Contribution of this paper}
In this paper we show that commutative algebras in the algebraic model for rational $G$--spectra are Quillen equivalent to rational na\"ive-commutative ring $G$--spectra, that is, the algebras
over the \emph{non-equivariant} $E_\infty$ operad equipped with the trivial $G$--action, 
denoted~$E_\infty^1$.
In the view of \cite{BlumbergHillNorms} this operad has the least commutative structure,
in particular it encodes the algebra structure without multiplicative norms.

Furthermore, the Quillen equivalences between the algebraic model and rational $G$-spectra do
not lift to Quillen equivalences of any more structured commutative objects, such as  
$\Comm$-algebras or algebras for a different $\Ninfty$--operad. 
Thus we have an interesting example of where 
symmetric monoidal Quillen equivalences do not induce 
Quillen equivalences of categories of $\Comm$-algebras. 

There are clear reasons why it should be expected that the
algebraic model only captures the lowest level of commutativity in the equivariant setting.
The methods used to obtain the monoidal algebraic model for
rational $G$--spectra rely on the complete idempotent splitting using methods of
Barnes \cite{barnessplitting} and left Bousfield localisation techniques.
The splitting operation cannot preserve norms, and hence won't preserve more structured commutative objects (see for example McClure \cite{McClureTate} or Section \ref{sec:obstructions}).
Equally, the operad $\Comm$ in the algebraic model doesn't encode any
additional structure beyond that of commutative algebras in the usual
sense,  and thus the category of its algebras can only model the 
rational na\"ive--commutative ring $G$--spectra.

The new results of the paper are in the analysis of the relation between a left Bousfield localisation at an object $E$ of $G$--spectra (denoted by $L_E \GSp$) and an $N_\infty$--operad $\mcO$.
In Section \ref{section:liftingModels} we identify conditions on $\mcO$ and $E$ which ensure that
$\algin{\mcO}{L_E \GSp}$ has a right-lifted model structure from $L_E \GSp$. 
In particular, the fibrant replacement functor of this lifted model structure on $\algin{\mcO}{L_E \GSp}$
is a functorial $E$-localisation that preserves $\mcO$--algebras. 

In the appendix we construct a model structure on $\algin{\Comm}{\GSp_\bQ}$, where $G$ is \emph{any compact Lie group}. This is a model structure on commutative ring $G$--spectra where weak equivalences are rational equivalences. 
This procedure does not follow from the previous analysis, since $\Comm$ is not an $N_\infty$--operad, but instead uses the existence of a commutative multiplication on the rational sphere spectrum.
The authors included this appendix as they were unable to find a sufficiently detailed reference for this result, 
which is needed in Section \ref{sec:isotropic}. 

This paper is also the first step in understanding what level of commutativity is modelled by the algebraic model for rational $SO(2)$--spectra of \cite{BGKSso2}. In this case various left Bousfield localisations and idempotent splittings are also used to obtain the algebraic model. Understanding the interplay between left Bousfield localisations, idempotents 
and operads $\mcO$ is crucial in this more complicated setting.
This in turn will be used in \cite{NComSO2} to show that rational $SO(2)$--equivariant
elliptic cohomology constructed in \cite{gre99} is modelled by
a rational $E_\infty^1$-ring-$G$-spectrum and hence there is a
description of the category of modules over this ring spectrum in
terms of algebraic geometry.

\subsubsection*{Relation to other work} There are recent results by 
Hill and Hopkins \cite{HillHopkins16}, Guti\'errez and White \cite{gw17}, and Hill \cite{hill17} on the interaction of Bousfield localisations (understood as a fibrant replacement in the left Bousfield localised model structure) with the structure of $\mcO$--algebras, for various $N_\infty$--operads $\mcO$. 
Our approach differs from the references mentioned above as we are 
primarily interested in creating a version of $E$-local model structure on 
$\mcO$--algebras in $G$--spectra.
This gives a stronger result than simply looking at images of $\mcO$--algebras under localisation functors. 
The existence of such a model structure implies that one can preserve the commutative structure 
modelled by $\mcO$ while doing a fibrant replacement relative to $E$. 
In particular, the conditions of Section \ref{section:liftingModels} 
are much more direct than those of
\cite[Theorems 6.2 and 6.3]{HillHopkins16}.

\subsubsection*{An algebraic model}

Our primary aim is Theorem \ref{thn:fullcompare}
which states  that for a finite group $G$, there is a zig-zag of Quillen equivalences 
\[
\algin{\einf^1}{  (G\endash \mathrm{Sp})_\bQ   }
\  \simeq \ 
\algin{\Comm} {\prod_{(H) \leqslant G} \Ch(\bQ[W_G H])}
\]
where $W_GH=N_GH/H$ is the Weyl group of $H$ in $G$.
This is based on the chain of symmetric monoidal Quillen equivalences of
\cite[Corollary 5.10]{KedziorekExceptional}.
To illustrate the whole path of that result, we present a diagram which shows every step of this comparison. The reader may wish to refer to this diagram now, but the notation will be introduced as we proceed. Left Quillen functors are placed on the left and we use the notation $N=N_GH$ and  $W=W_GH=N_GH/H$.  Also, $e_{(H)_G}$ denotes the idempotent supported on the conjugacy class of $H$ in the rational Burnside ring of $G$.  We present on the right the series of Quillen equivalence at the level of (na\"ive) commutative ring objects.

\[\begin{array}{lccr}
\xymatrix@R=2pc{
L_{e_{(H)_G}S_{\bQ}}(G\endash \spO)
\ar@<-1ex>[d]_{i^{*}}
\\
L_{e_{(H)_N}S_{\bQ}}(N \endash \spO)
\ar@<-1ex>[u]_{F_N(G_+,-)}
\ar@<+1ex>[d]^{(-)^H}
\\
L_{e_1S_\bQ}(W\endash \spO)
\ar@<+1ex>[u]^{ \epsilon^\ast}
\ar@<+1ex>[d]^{\mathrm{res}}
\\
\spO_\bQ[W]
\ar@<+1ex>[u]^{\mathrm{I}_t^c}
\ar@<+1ex>[d]^{ \mathrm{Sing}\circ \mathbb{U}}
\\
\spO_\bQ^\Sigma[W]
\ar@<+1ex>[u]^{\mathbb{P} \circ |-|}
\ar@<-1ex>[d]_{H\bQ\wedge -}
\\
(H\bQ\leftmod)[W]
\ar@<-1ex>[u]_{U}
\ar@<-0ex>[d]_{\text{zig-zag of}}
\\
\Ch(\bQ[W])
\ar@<-0ex>[u]_{\text{Quillen equivalences}}
}
& \quad \quad \quad &
\xymatrix@R=2pc{
\algin{\einf^1}{  L_{e_{(H)_G}S_{\bQ}}(\GSp)   }
\ar@<-1ex>[d]_{i^{*}}
\\
\algin{\einf^1}{  L_{e_{(H)_N}S_{\bQ}}(\NSp)   }
\ar@<-1ex>[u]_{F_N(G_+,-)}
\ar@<+1ex>[d]^{(-)^H}
\\
\algin{\einf^1}{   L_{e_1S_\bQ}(W\endash \spO)    }
\ar@<+1ex>[u]^{ \epsilon^\ast}
\ar@<+1ex>[d]^{\mathrm{res}}
\\
\algin{\einf^1}{   \spO_\bQ[W]   }
\ar@<+1ex>[u]^{\mathrm{I}_t^c}
\ar@<-1ex>[d]_{\text{change of}}
\\
\algin{\Comm}{   \spO_\bQ[W]    }
\ar@<-1ex>[u]_{\text{operads}}
\ar@<+1ex>[d]^{ \mathrm{Sing}\circ \mathbb{U}}
\\
\algin{\Comm}{   \spS_\bQ[W]    }
\ar@<+1ex>[u]^{\mathbb{P} \circ |-|}
\ar@<-1ex>[d]_{H\bQ\wedge -}
\\
\algin{\Comm}{   (H\bQ\leftmod)[W]  }
\ar@<-1ex>[u]_{U}
\ar@<-0ex>[d]_{\text{zig-zag of}}
\\
\algin{\Comm}{   \Ch(\bQ[W])   }
\ar@<-0ex>[u]_{\text{Quillen equivalences}}
}
\end{array}
\]

\subsubsection*{Outline of the paper}
The aim is to show that the right hand column of the above diagram is 
a series of Quillen equivalences. 
We start by recalling basic definitions and results from Blumberg and Hill \cite{BlumbergHillNorms} on $N_\infty$--operads in Section \ref{sec:recolloectionBH}. 
Section \ref{sec:strategy} describes a general strategy of lifting adjunctions and model structures to the level of algebras over operads. It also provides reasonable conditions which imply that a Quillen equivalence at the level of underlying categories lifts to a Quillen equivalence at the level of algebras over an operad.

In Section \ref{section:liftingModels}, we give a 
general result with directly checkable assumptions that 
ensures there is a right--lifted model structure on 
$\mcO$--algebras in $L_E \GSp$ (and related categories), see 
Theorem \ref{thm:algebramodelstructure}.
The technical heart of the paper is Section \ref{sec:isotropic} where 
the details of Theorem \ref{thm:algebramodelstructure} are proven. 

In Section \ref{sec:liftingQE} we verify that the assumptions of the previous 
results hold in the cases of interest: the zig-zag of Quillen equivalences 
from \cite{KedziorekExceptional}. This section gives the main result, 
of the paper: Theorem \ref{thn:fullcompare}. 

Finally, the appendix shows that there is a model structure on 
\emph{rational} commutative ring $G$--spectra right-lifted from 
the rational positive stable model structure on $G$--spectra.
This uses a different method to the results on $\mcO$--algebras 
for $\Ninfty$--operads $\mcO$ and works for any compact Lie group $G$. 

\subsubsection*{Notation} We stick to the convention of writing a left adjoint in any adjoint pair on top or on the left of a right one.

\subsubsection*{Acknowledgements} The authors would like to thank the Mathematisches Forschungsinstitut Oberwolfach for giving them an ideal venue to work on this project as part of the Research in Pairs Programme.

\section{Recollections on \texorpdfstring{$N_\infty$}{N-infinity}--operads}\label{sec:recolloectionBH}

\begin{definition}\label{def:Goperad}
A $G$-operad $\mcO$ consists of a sequence of $G\times \Sigma_n$--spaces $\mcO_n$, $n\geq 1$, such that
\begin{enumerate}
\item There is a $G$-fixed identity element $1\in \mcO_1$
\item there are $G$-equivariant composition maps
$$\mcO_k\times \mcO_{n_1}\times \mcO_{n_1} ... \times \mcO_{n_k} \lra \mcO_{n_1+...+n_k}$$
satisfying the usual compatibility conditions with each other and with the symmetric group actions, see \cite[2.1]{CostenobleWaner}.
\end{enumerate}
\end{definition}

Recall that for a group $G$ a \emph{family} is a collection of subgroups of $G$ closed under conjugation and taking subgroups. If $\cF$ is a family, then a universal space $E\cF$ for the family $\cF$ is a $G$-space with the property that
$$(E\cF)^H=\begin{cases}
* & H\in \cF\\
\emptyset & H\not \in \cF.
\end{cases}$$

\begin{definition}\label{def:Ninfty_operad}\cite[Definition 1.1]{BlumbergHillNorms} A $G$-operad
$\mcO$ is an $\Ninfty$-operad if
\begin{itemize}
\item $\mcO(n)$ is universal space for a family $\mcF_n(\mcO)$ of
subgroups of $G \times \Sn$ that contains all subgroups of the form
$H \times \{1\}$. 
\item $\Sn$ acts freely on $\mcO(n)$.
\item $\mcO(0)$ is $G$-contractible.
\end{itemize}
\end{definition}

If we forget the $G$ action on an $\Ninfty$ operad 
we obtain an $\einf$ operad in the classical  sense.
The condition that $\Sn$ acts freely on $\mcO(n)$ is precisely the statement that
for any subgroup of the form $\{ e \} \times K$ of $\GSn$, $\mcO(n)^{\{ e \} \times K} = \emptyset$.
It follows that $\mcF_n(\mcO)$ cannot contain any subgroup of the form $\{ e \} \times K$.
Thus any $\Gamma \in \mcF_n(\mcO)$ must have trivial intersection with $\{ e \} \times \Sn$
(or $\Gamma$ would share a non-trivial subgroup with $\{ e \} \times \Sn$, which in turn
would be an element of the family $\mcF_n(\mcO)$).
We say that a subgroup $\Gamma$ of $\GSn$ is \textbf{admissible} if 
$\Gamma \cap \{ e \} \times \Sn=\{e \times e\}$.

Since $\Ninfty$--operads are characterised by the families of subgroups from the definition above, let us fix some notation. We will use the notation $\einf^1$ for an $\Ninfty$ operad, where for every $n$ we take the family 
$\FSn= \{H\times \{ e \} \mid H \leqslant G \}$. Notice that those are the \textbf{minimal} families for an $\Ninfty$ operad with respect to $G$.
On the other hand, we can take the \textbf{maximal} families in the definition above, that is, for $n\geq 1$ take the family  $\FGSn$ of all admissible subgroups of $G\times \Sn$.
We will use the notation $\eginf$ for any operad with $\mcF_n(\mcO) = \FGSn$ for all $n \geqslant 0$.

\begin{example}\label{ex:operads}
Let $U$ denote a $G$-universe, i.e. a countably infinite-dimensional real $G$-inner product space which contains each finite dimensional sub-representation infinitely often. The \emph{linear isometries operad} $\cL(U)$ is a $G$-operad such that $\cL(U)(n):= \cL(U^n,U)$, where $ \cL(U^n,U)$ denotes non-equivariant linear isometries from $U^n$ to $U$. It is a $G\times \Sn$ space by conjugation and diagonal action. The identity map $U\lra U$ is the distinguished element of $\cL(U)(1)$ and the structure maps are given by composition.

If $U$ is a \textbf{complete} $G$-universe then $\cL(U)$ is an example of $\eginf$ operad.

If $U$ is a \textbf{trivial} $G$-universe then $\cL(U)$ is an example of $\einf^1$ operad.

Both are examples of $\Ninfty$ operads.
\end{example}

Since the category of $G$-spectra is tensored over $G$-spaces we can consider $\mcO$-algebras in $G$-spectra, where $\mcO$ is a $G$-operad (an operad in $G$-spaces). In particular this applies when $\mcO$ is an $\Ninfty$-operad.

In the next proposition we consider the category of orthogonal $G$-spectra with the positive stable model structure of \cite[Section III.5]{mm02}.

\begin{proposition}\cite[Proposition A.1]{BlumbergHillNorms} For any $\Ninfty$-operad $\mcO$ such that each $\mcO(n)$ has a homotopy type of a $G\times \Sn$-CW complex there exists a right-lifted model structure on $\mcO$-algebras in  orthogonal $G$--spectra (i.e. the weak equivalences and fibrations are created in the category of orthogonal $G$--spectra with the positive stable model structure).
\end{proposition}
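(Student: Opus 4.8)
The plan is to apply the standard transfer (right-lifting) theorem for model structures along the free-forgetful adjunction
\[
\mathbb{P} \co \GSp \rightleftarrows \algin{\mcO}{\GSp} \co U,
\]
where $\mathbb{P}$ sends a $G$--spectrum $X$ to $\coprod_{n \geqslant 0} \mcO(n) \smashprod_{\Sn} X^{\smashprod n}$. The candidate weak equivalences and fibrations in $\algin{\mcO}{\GSp}$ are those maps sent by $U$ to weak equivalences, resp. fibrations, in the positive stable model structure of \cite[Section III.5]{mm02}. One must check the hypotheses of the transfer theorem (e.g.\ the version in Schwede--Shipley or Crans): the positive stable model structure on $\GSp$ is cofibrantly generated, $U$ preserves filtered colimits (since $\mcO(n) \smashprod_{\Sn} (-)^{\smashprod n}$ does), and the only non-formal input is the acyclicity condition: every map obtained by applying $\mathbb{P}$ to a generating acyclic cofibration, and then taking a pushout along an arbitrary map of $\mcO$--algebras, must be a (positive stable) weak equivalence after applying $U$.

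First I would recall the explicit form of the generating (acyclic) cofibrations of the positive stable model structure on orthogonal $G$--spectra, and note that $\mathbb{P}$ of a (acyclic) cofibration is again one in the projective sense. Then I would analyse a pushout in $\algin{\mcO}{\GSp}$ of the form $\mathbb{P}(j) \colon \mathbb{P}(A) \to \mathbb{P}(B)$ pushed out along $\mathbb{P}(A) \to R$ for an $\mcO$--algebra $R$. The key is the standard filtration of such a pushout $R \to R'$ by a sequence $R = R_0 \to R_1 \to R_2 \to \cdots$ whose associated graded pieces are built from $\mcO(n) \smashprod_{\Sn} (\text{pushout-product powers of } j \text{ smashed with powers of } R)$. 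The CW hypothesis on each $\mcO(n)$ enters precisely here: it guarantees that $\mcO(n) \smashprod_{\Sn} (-)$ is a left Quillen functor from $\Sn$--objects in $G$--spectra (with an appropriate model structure) and in particular sends the relevant acyclic cofibrations to positive stable equivalences. Because $\Sn$ acts freely only on $\mcO(n)$ for $\Ninfty$--operads but not on the spectrum-level smash powers, one uses that in the \emph{positive} model structure the $n$-th smash power of a cofibrant spectrum is sufficiently cofibrant as a $\Sn$--equivariant object (this is exactly why the positive, rather than the naive stable, model structure is used), so that the induced map on $\mcO(n) \smashprod_{\Sn} (-)$-pieces is a weak equivalence.

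The main obstacle, and the technical heart of this proposition, is this acyclicity/cofibrancy bookkeeping: showing that smashing a positive-cofibrant acyclic cofibration with itself and with copies of $R$, then inducing up along the free $\Sn$--action on $\mcO(n)$, stays a positive stable equivalence, and that the transfinite composition of the filtration stages is again one. This is where one invokes that $\mcO(n)$ has the $G \times \Sn$--CW homotopy type — so that $\mcO(n)_+ \smashprod_{\Sn} (-)$ behaves like a homotopy colimit over the free $\Sn$--action — together with the properties of the positive stable model structure (the $n$-th power of a positive cofibration is a cofibration in the $\Sn$--equivariant positive model structure, a result of \cite{mm02}). Once the filtration argument is in place, the remaining verifications (2-out-of-3, retracts, lifting, the other pushout axiom) are formal consequences of the transfer theorem, and one concludes that the right-lifted model structure on $\algin{\mcO}{\GSp}$ exists as claimed.
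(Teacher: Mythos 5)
Your proposal is correct and matches the standard approach: the paper does not reprove this cited result of Blumberg--Hill, but its own generalization (Theorem \ref{thm:algebramodelstructure}, with proof completed in Section \ref{sec:isotropic}) uses exactly the same strategy you describe — transfer along the free-forgetful adjunction, reduce the acyclicity condition to the Harper--Hess filtration of pushouts of free maps, and exploit the interaction of the positive model structure with the freeness of the $\Sn$-action on $\mcO(n)$. Your sketch captures the essential ideas faithfully.
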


By \cite[Definition 3.9]{BlumbergHillNorms} a map $\mcO \lra \mcO'$ of $G$-operads is a weak equivalence if for any $n\geq 1$ the map $\mcO(n)^\Gamma \lra \mcO'(n)^\Gamma$ is an equivalence for all subgroups $\Gamma \subseteq G\times \Sn$.

With this definition we recall the comparison result for algebras over weakly equivalent $\Ninfty$ operads.

\begin{lemma}\cite[Theorem A.3]{BlumbergHillNorms} Suppose $f: \mcO \lra \mcO'$ is a map of $\Ninfty$ operads,  such that each $\mcO(n), \mcO'(n)$ have homotopy types of $G\times \Sn$-CW complexes and $\mcO(1),\mcO'(1)$ have nondegenerate $G$-fixed basepoints. Then the adjunction $(f_*,f^*)$ is a Quillen equivalence between categories of their algebras.
\end{lemma}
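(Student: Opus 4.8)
The plan is to run the standard comparison argument for algebras over a weak equivalence of operads at the point-set level in the positive stable model structure on orthogonal $G$--spectra. Write $f(n)\co \mcO(n)\to \mcO'(n)$ for the levelwise maps; since $f$ is a weak equivalence of $\Ninfty$--operads, each $f(n)$ is a $(\GSn)$--equivalence between spaces of $(\GSn)$--CW homotopy type on which $\Sn$ acts freely (Definition \ref{def:Ninfty_operad}). By the Proposition of \cite{BlumbergHillNorms} quoted above, both $\algin{\mcO}{\GSp}$ and $\algin{\mcO'}{\GSp}$ carry model structures right--lifted from the positive stable model structure, so that weak equivalences and fibrations are detected on underlying spectra. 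As $f^*$ is the identity on underlying spectra it preserves fibrations and weak equivalences, so $(f_*,f^*)$ is a Quillen adjunction, and $f^*$ also \emph{reflects} weak equivalences; hence $(f_*,f^*)$ is a Quillen equivalence as soon as the derived unit is a weak equivalence, and — fibrant replacement being invisible to the underlying homotopy type — it is enough to show that $A\to f_*A$ is an underlying weak equivalence for every cofibrant $\mcO$--algebra $A$.

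A cofibrant $\mcO$--algebra is a retract of a cell $\mcO$--algebra, that is, a transfinite composition of pushouts of maps of free $\mcO$--algebras $\mcO\circ K\to \mcO\circ L$ along generating cofibrations $K\to L$ of the positive stable model structure; in particular $K$ and $L$ are cofibrant there, which is exactly what makes the smash powers $(L/K)^{\sm n}$ of the cofibre suitably $\Sn$--cofibrant. As $f_*$ is a left adjoint it preserves all these colimits and carries $\mcO\circ X$ to $\mcO'\circ X$, and the transition maps are $h$--cofibrations of $G$--spectra; so it suffices to show, for a single pushout
\[
B \;=\; A\amalg_{\mcO\circ K}\mcO\circ L,\qquad A\ \text{cofibrant},\ K\to L\ \text{a positive cofibration},
\]
that $B\to f_*B = f_*A\amalg_{\mcO'\circ K}\mcO'\circ L$ is an underlying weak equivalence, granting inductively that $A\to f_*A$ is one.

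For this one invokes the standard filtration $A=B_0\to B_1\to\cdots$ with $B=\colim_n B_n$ of the underlying $G$--spectrum of $B$, in which each $B_{n-1}\to B_n$ is an $h$--cofibration whose cofibre has the form $\mcO(n)_+\sm_{\Sn}Z_n$ for a $\Sn$--spectrum $Z_n$ built functorially from $A$ and $(L/K)^{\sm n}$; applying $f_*$ replaces $\mcO(n)$ by $\mcO'(n)$ and leaves $Z_n$ unchanged. The crucial point is that, because $\Sn$ acts freely on the $(\GSn)$--CW complexes $\mcO(n)$ and $\mcO'(n)$ and $Z_n$ is $\Sn$--cofibrant, $\mcO(n)_+\sm_{\Sn}Z_n$ computes the homotopy orbits $(\mcO(n)_+\sm Z_n)_{h\Sn}$, and hence the $(\GSn)$--equivalence $f(n)$ induces a $G$--equivalence $\mcO(n)_+\sm_{\Sn}Z_n\to \mcO'(n)_+\sm_{\Sn}Z_n$; the hypothesis that $\mcO(1)$ and $\mcO'(1)$ have nondegenerate $G$--fixed basepoints is what makes the $n=1$ layer — the free monad $\mcO(1)_+\sm(-)$ — and the basepoint sections of the filtration behave correctly. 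Feeding this through the filtration, and using that $\colim$ along $h$--cofibrations preserves levelwise weak equivalences, shows $B\to f_*B$ is an underlying weak equivalence; induction along the cell presentation, passage to the transfinite colimit, and the retract argument then give the claim for every cofibrant $A$, completing the proof.

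The bulk of the work — and the step I expect to be the main obstacle — is the point-set bookkeeping of this pushout filtration: identifying the subquotients $\mcO(n)_+\sm_{\Sn}Z_n$, verifying the $\Sn$--cofibrancy of $(L/K)^{\sm n}$ and of $Z_n$ in the \emph{positive} stable model structure (this is precisely where positivity is essential, and where the $(\GSn)$--CW hypothesis on the $\mcO(n)$ enters), and checking that the homotopy--orbit comparison is sufficiently natural to be compatible with the filtration and with transfinite composition. Everything else is formal.
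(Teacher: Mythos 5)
The paper states this lemma as a direct citation of \cite[Theorem A.3]{BlumbergHillNorms} and gives no proof of its own, so there is no in-paper argument to compare against; your write-up is a reconstruction of Blumberg and Hill's cell-induction proof.

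The overall strategy is right, but there is a genuine imprecision in the filtration step. For a relative cell attachment $B = A\amalg_{F_{\mcO}K}F_{\mcO}L$ with $A$ nontrivial, the Harper--Hess filtration of $B$ does \emph{not} have subquotients of the form $\mcO(n)_+\sm_{\Sn}Z_n$ with $Z_n$ independent of $\mcO$. The subquotients are $\mcO_A(n)\sm_{\Sn}(L/K)^{\sm n}$, where $\mcO_A$ denotes the enveloping operad of the pair $(\mcO,A)$, which encodes both $\mcO$ and the algebra structure of $A$. Applying $f_*$ therefore changes this $A$-dependent object as well, not just the $\mcO(n)$ factor; to close the induction one must also prove that the comparison $\mcO_A(n)\to\mcO'_{f_*A}(n)$ of $\GSn$-objects is a weak equivalence for all $n$, by a further induction over the cell structure of $A$ interleaved with the filtration argument you describe. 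That is a real extra conceptual step, not merely ``point-set bookkeeping''. The remaining ingredients — reducing to the derived unit on cofibrant algebras, using the $\Ninfty$-hypotheses (free $\Sn$-action, $\GSn$-CW homotopy type, nondegenerate basepoints) to make $\mcO(n)_+\sm_{\Sn}(-)$ homotopy-invariant on suitably cofibrant $\Sn$-objects, and passing the $\GSn$-equivalence $f(n)$ through the orbit construction — are all correct.
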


\section{Strategy for lifting model structures}\label{sec:strategy}

We are interested in considering $\mcO$-algebras in various symmetric
monoidal categories  $\bC$ for an operad $\mcO$. We want to put model
structures on these categories of algebras, and to show that Quillen
equivalences on the underlying categories lift to Quillen equivalences
on categories of algebras. The purpose of this section is to record
the formal part of this process, showing the basic conditions that
need to be satisfied in order to perform a right lifting of model structures and Quillen equivalences.
We will apply this strategy numerous times in the rest of the paper.

Suppose $\mcO$ is an operad
in a category $V$ and $\bC$ is tensored over $V$. In that situation
there is an adjunction
$$\adjunction{F_{\mcO}}{\bC}{\algin{\mcO}\bC}{U}$$
where $U$ is the forgetful functor and $F_{\mcO}$ is the free
$\mcO$-algebra functor
$$F_{\mcO}X =\coprod_n \mcO (n)\tensor_{\Sigma_{n}} X^{\tensor n} .$$

If $\bC$ is a model category we want to right lift the model structure from $\bC$ to
the category $\algin{\mcO}{\bC}$ of $\mcO$-algebras in $\bC$ using
the right adjoint $U$, i.e. we want the weak equivalences and fibrations to be created by $U$.

To do that we will use Kan's result for right lifting model structures.

\begin{lemma}\cite[Theorem 11.3.2]{hir03}
\label{lem:liftedmodel} Suppose $\bC$ is a cofibrantly generated model category with a set of generating acyclic cofibrations $J$.  If
\begin{itemize}
\item  $F_{\mcO}$ preserves small objects (or equivalently $U$ preserves filtered
  colimits) and
  \item every transfinite composition of pushouts (cobase extensions)
  of elements of $F_{\mcO}(J)$ is sent to a weak equivalence
  of $\bC$ by $U$,
\end{itemize}
then the model structure on $\bC$ may be lifted using the right
adjoint to give a cofibrantly generated model structure on
$\algin{\mcO}{\bC}$. The functor $U$ then creates fibrations and weak
equivalences.
\end{lemma}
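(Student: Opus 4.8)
The plan is to run Kan's recognition theorem for cofibrantly generated model structures, transferring the model structure along the free--forgetful adjunction $(F_{\mcO}, U)$. Write $I$ for the generating cofibrations of $\bC$ and set $I' = F_{\mcO}(I)$ and $J' = F_{\mcO}(J)$. I would declare a morphism $f$ of $\algin{\mcO}{\bC}$ to be a weak equivalence (resp.\ fibration) precisely when $U(f)$ is one in $\bC$, and take the cofibrations to be the morphisms with the left lifting property against the maps that are both weak equivalences and fibrations. The work is then to verify the model axioms together with cofibrant generation by $I'$ and $J'$.

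First, the formal preliminaries. The category $\algin{\mcO}{\bC}$ is bicomplete: $U$ is monadic over the cocomplete category $\bC$, so it creates limits, and colimits exist by the standard reflexive-coequaliser (and filtered colimit) presentation of algebras over an operad, using that $U$ preserves filtered colimits by hypothesis. The retract and 2-out-of-3 axioms for weak equivalences are immediate since $U$ preserves retracts and compositions and detects weak equivalences. By the adjunction, a map $p$ has the right lifting property against $I'$ (resp.\ $J'$) if and only if $U(p)$ has it against $I$ (resp.\ $J$), i.e.\ iff $U(p)$ is an acyclic fibration (resp.\ fibration) of $\bC$; hence the $I'$-injectives are exactly the acyclic fibrations and the $J'$-injectives exactly the fibrations of $\algin{\mcO}{\bC}$ as defined above.

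Next, the small object argument applies to $I'$ and $J'$: the domains of $I$ and $J$ are small in $\bC$, and $F_{\mcO}$ preserves smallness by hypothesis, so the domains of $I'$ and $J'$ are small in $\algin{\mcO}{\bC}$. This yields functorial factorisations of any map as a relative $I'$-cell complex followed by an $I'$-injective, and likewise with $J'$. A relative $I'$-cell complex lifts against all $I'$-injectives, hence is a cofibration, giving the (cofibration, acyclic fibration) factorisation. For the other factorisation, each $j \in J$ is a cofibration of $\bC$, hence a retract of a relative $I$-cell complex; since $F_{\mcO}$ is a left adjoint it preserves colimits and retracts, so $F_{\mcO}(j)$ is a retract of a relative $I'$-cell complex, i.e.\ a cofibration of $\algin{\mcO}{\bC}$. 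Consequently every relative $J'$-cell complex is a cofibration, and by the second hypothesis it is sent to a weak equivalence by $U$; so the factorisation through a relative $J'$-cell complex followed by a $J'$-injective is a factorisation as an acyclic cofibration followed by a fibration.

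It remains to identify the acyclic cofibrations and check the lifting axiom, which a single retract argument handles. Given a map $f$ that is a cofibration and a weak equivalence, factor $f = p \circ i$ with $i$ a relative $J'$-cell complex and $p$ a fibration; by 2-out-of-3 $p$ is a weak equivalence, hence an acyclic fibration, so $f$ lifts against $p$ and is therefore a retract of $i$; since $i$ lifts against all fibrations, so does $f$. Conversely, if $f$ lifts against all fibrations, the same factorisation exhibits $f$ as a retract of $i$, whence $f$ is a weak equivalence and (trivially) a cofibration. This gives both lifting axioms and finishes the verification, and cofibrant generation is built in by construction. I do not expect a serious obstacle here: this is exactly the standard transfer (Kan lifting) theorem, and its only non-formal input is the second hypothesis — that relative $F_{\mcO}(J)$-cell complexes map to weak equivalences — so the only point demanding care is the compatibility of colimits in $\algin{\mcO}{\bC}$ with the smallness and cell-complex arguments, all of which is underwritten by $F_{\mcO}$ being a small-object-preserving left adjoint.
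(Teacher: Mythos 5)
Your proof is correct, and it is essentially the standard proof of Kan's lifting theorem (Hirschhorn, Theorem~11.3.2). The paper does not give a proof of this lemma at all --- it merely cites Hirschhorn --- so there is no in-paper argument to compare against; your proposal simply reconstructs the cited theorem in the specific setting of the free--forgetful adjunction $(F_{\mcO}, U)$.

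One small remark: the step where you show $F_{\mcO}(j)$ is a cofibration can be streamlined. Rather than passing through the presentation of $j$ as a retract of a relative $I$-cell complex, one can use the adjunction directly: $F_{\mcO}(j)$ has the left lifting property against a map $p$ if and only if $j$ has the left lifting property against $U(p)$, and since $U(p)$ is an acyclic fibration of $\bC$ whenever $p$ is an $I'$-injective, and $j$ is a cofibration of $\bC$, the lifting is immediate. Your route is correct, just slightly longer. Everything else --- the identification of $I'$-injectives and $J'$-injectives via adjunction, the small object argument using smallness preservation by $F_{\mcO}$, the use of the second hypothesis to make relative $J'$-cell complexes into weak equivalences, and the closing retract argument characterising acyclic cofibrations --- is exactly as it should be.
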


Since later on we will be interested in lifting Quillen equivalences we consider the following situation.
Suppose there are two symmetric monoidal categories $\bC$ and $\bD$, both tensored over a closed symmetric monoidal category $\bV$. Let $\mcO$ be an operad in $\bV$. Suppose further that we have an adjunction
\[
\xymatrix@C=3pc{
\bC \ar@<0.5ex>[r]^L &\bD. \ar@<0.5ex>[l]^R
}
\]
If both categories $\bC, \bD$ have model structures which can be lifted to $\mcO$ algebras we obtain the following diagram
\[
\xymatrix{
\algin{\mcO}{\bC} \ar@<0.5ex>[d]^U &
\algin{\mcO}{\bD} \ar@<0.5ex>[d]^U \\
\bC \ar@<0.5ex>[r]^L \ar@<0.5ex>[u]^{F_{\mcO}}&\bD \ar@<0.5ex>[l]^R \ar@<0.5ex>[u]^{F_{\mcO}}
}
\]
It is natural to ask under what assumptions on $L$ and $R$ the adjunction lifts to the adjunction at the level of algebras and moreover when is it a Quillen pair and a Quillen equivalence. We will answer these questions below.

\begin{remark} Notice that we can consider a generalisation of a situation above, namely algebras over different operads in $\bC$ and in $\bD$. Since we won't use that generalisation in this paper, we will stick to the simpler case of algebras over the same operad both in $\bC$ and in $\bD$.
\end{remark}

For the rest of this section, 
we assume that the hypotheses of Lemma \ref{lem:liftedmodel} have been verified for
both $(\bC , \mcO)$ and $(\bD  , \mcO)$, so that $\algin{\mcO}{\bC}$ and
$\algin{\mcO}{\bD}$  have both been given the lifted model structures.

Now suppose that the left adjoint $L$ is tensored over a closed symmetric monoidal category $\bV$ 
and that $L$ is a strong symmetric monoidal functor. 
This implies that the right adjoint $R$ is lax symmetric monoidal and both functors lift to the level of $\mcO$-algebras.
Note that a $\bV$-enriched adjunction lifts to algebras over compatible  $\bV$-operads.

The more general case of lifting adjunctions in case of
non-commutative monoids was
dealt with in \cite[Subsection 3.3]{SchwedeShipleyMon}.  These are all special cases of the adjoint lifting theorem, see for example \cite[Theorem 4.5.6]{borceux2}.

\begin{lemma}\label{lem:adjunctionlift}
Let $(L,R)$ be a $\bV$-enriched strong symmetric monoidal adjunction between $\bV$-tensored 
symmetric monoidal categories $\bC$ and $\bD$.

Let $\mcO$ be an operad in $\bV$,
then $L$ and $R$ extend to functors of $\mcO$-operads in $\bC$ and $\bD$, and we obtain 
a commutative square of adjunctions
\[
\xymatrix{
\algin{\mcO}{\bC} \ar@<0.5ex>[d]^U  \ar@<0.5ex>[r]^L
&
\algin{\mcO}{\bD} \ar@<0.5ex>[d]^U   \ar@<0.5ex>[l]^R
\\
\bC \ar@<0.5ex>[r]^L \ar@<0.5ex>[u]^{F_{\mcO}}
&
\bD . \ar@<0.5ex>[l]^R \ar@<0.5ex>[u]^{F_{\mcO}}
}
\]
\end{lemma}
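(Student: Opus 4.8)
The plan is to apply the adjoint lifting theorem (as in \cite[Theorem 4.5.6]{borceux2}) after checking that the free/forgetful adjunctions $(F_\mcO, U)$ on each side, together with the adjunction $(L,R)$ on the underlying categories, assemble into the kind of situation that theorem handles. First I would record that, since $L$ is a strong symmetric monoidal $\bV$-functor, its right adjoint $R$ is lax symmetric monoidal by Kelly's doctrinal adjunction, and both functors are compatible with the $\bV$-tensoring: concretely, there is a natural isomorphism $L(v \tensor c) \cong v \tensor L(c)$ for $v \in \bV$, $c \in \bC$, and dually a natural (lax) map $v \tensor R(d) \to R(v \tensor d)$. This is the structure needed to make $R$ send $\mcO$-algebras in $\bD$ to $\mcO$-algebras in $\bC$: given an $\mcO$-algebra $A$ in $\bD$ with structure maps $\mcO(n) \tensor A^{\tensor n} \to A$, one forms $\mcO(n) \tensor R(A)^{\tensor n} \to R(\mcO(n) \tensor A^{\tensor n}) \to R(A)$ using the lax monoidal structure on $R$ (to compare $R(A)^{\tensor n}$ with $R(A^{\tensor n})$) and the lax tensoring map; the operad axioms for the lifted structure follow from naturality and the coherence of the lax structure. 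Dually, $L$ sends $\mcO$-algebras in $\bC$ to $\mcO$-algebras in $\bD$ using the \emph{strong} monoidal and strong tensoring isomorphisms, which is the easier direction.

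Next I would establish that the lifted $\bar L \dashv \bar R$ is genuinely an adjunction and that the square of right adjoints commutes, i.e. $U \bar R = R U$. Commutativity on the nose is immediate from the construction: $\bar R(A)$ has underlying object $R(A) = R(U A)$, and $\bar L(B)$ has underlying object $L(B) = L(UB)$, so $U \bar L = L U$ as well. For the adjunction itself, rather than constructing unit and counit by hand I would invoke the adjoint lifting theorem: we have the square of left adjoints $F_\mcO \co \bC \rightleftarrows \algin{\mcO}{\bC}$, $F_\mcO \co \bD \rightleftarrows \algin{\mcO}{\bD}$, $L \co \bC \rightleftarrows \bD$, and $\algin{\mcO}{\bC}$ has reflexive coequalizers (it is cocomplete, being a category of algebras over an operad in a cocomplete category, or one can check the relevant coequalizers exist directly as colimits computed via the free functor); the theorem then produces the left adjoint $\bar L$ to $\bar R$ together with the commuting square. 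Since \cite[Subsection 3.3]{SchwedeShipleyMon} carries this out in the associative-monoid case and the argument is formally identical for an arbitrary operad $\mcO$ in $\bV$, I would simply point to that precedent and the general statement in \cite{borceux2} rather than reprove it.

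The main obstacle is really bookkeeping rather than a genuine mathematical difficulty: one must be careful that the $\mcO$-algebra structure transported along $R$ uses only the lax (not strong) monoidal and lax tensoring structure, and verify that the operadic composition and unit axioms survive. The key coherence point is that the lax structure maps on $R$ are compatible with the associativity and symmetry constraints of $\bV$, $\bC$, $\bD$, so that the diagrams expressing the $\mcO$-algebra axioms for $R(A)$ commute; this is exactly the content of "$R$ lax symmetric monoidal and lax $\bV$-tensored", and it is where the hypothesis that $(L,R)$ is a $\bV$-enriched \emph{symmetric} monoidal adjunction (not merely monoidal) is used, since the operad actions involve the $\Sigma_n$-symmetry. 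Once that is in place, everything else — the commuting square, the fact that $\bar L$ and $\bar R$ restrict the underlying $L$ and $R$, and hence (in the later sections) that Quillen-pair and Quillen-equivalence properties can be checked on underlying categories — is formal.
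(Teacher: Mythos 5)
Your proposal follows essentially the same route as the paper, which only sketches this lemma in the surrounding prose: doctrinal adjunction gives the lax symmetric monoidal structure on $R$ from $L$ being strong symmetric monoidal, both functors then lift to $\mcO$-algebras, and the resulting adjunction and commuting square are justified by citing \cite[Subsection 3.3]{SchwedeShipleyMon} and the adjoint lifting theorem \cite[Theorem 4.5.6]{borceux2}. Your elaboration of the lax structure maps on $R$ and the on-the-nose commutativity $U\bar{L}=LU$, $U\bar{R}=RU$ is the right level of detail; the one point worth tidying is that since $L$ is strong symmetric monoidal the lifted $\bar L$ is already the direct lift of $L$ (which is why $U\bar L = LU$ holds strictly), so the coequalizer-based adjoint lifting theorem is not actually needed here but only in the non-strong-monoidal generalisation the paper defers to the remark following the lemma, with the two constructions agreeing by uniqueness of adjoints.
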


\begin{remark} Notice, that we can consider a more general situation of lifting adjoint pairs to the level of algebras, when the left adjoint is \emph{not} strong symmetric monoidal. However, in our applications all left adjoints are strong symmetric monoidal, thus we restrict attention to this particular situation.
\end{remark}

It is automatic that the lifted adjunction of Lemma \ref{lem:adjunctionlift} 
forms a Quillen pair.

\begin{lemma}\label{lem:liftQP}
Suppose that $L$ and $R$ are a Quillen pair
and that the hypotheses of Lemma \ref{lem:liftedmodel}   hold so
that the categories of $\mcO$--algebras have lifted cofibrantly generated model structures and the
functors lift to an adjoint pair $L$, $R$.   Then  $L$, $R$ is a Quillen pair at the level of $\mcO$-algebras. \qed
\end{lemma}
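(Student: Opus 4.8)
The plan is to verify the claim in the form most convenient here: that the lifted right adjoint $R$ preserves fibrations and acyclic fibrations, which is one of the standard equivalent characterisations of a Quillen pair. This is the useful formulation because, by Lemma \ref{lem:liftedmodel}, fibrations and weak equivalences in $\algin{\mcO}{\bC}$ and in $\algin{\mcO}{\bD}$ are \emph{created} by the respective forgetful functors $U$: a map of $\mcO$-algebras is a fibration (resp. acyclic fibration) precisely when $U$ sends it to one.

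First I would invoke Lemma \ref{lem:adjunctionlift}: since $(L,R)$ on the underlying categories is a $\bV$-enriched strong symmetric monoidal adjunction, it produces a commutative square of adjunctions relating $\algin{\mcO}{\bC}, \algin{\mcO}{\bD}, \bC, \bD$. In particular the square of \emph{right} adjoints commutes, so $U \circ R = R \circ U$, where on the left $R$ and $U$ denote the lifted functor and the forgetful functor out of $\algin{\mcO}{\bD}$, and on the right they denote the underlying right adjoint and the forgetful functor out of $\algin{\mcO}{\bC}$.

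Now let $f$ be a fibration (resp. acyclic fibration) in $\algin{\mcO}{\bD}$. By the lifted model structure, $Uf$ is a fibration (resp. acyclic fibration) in $\bD$. Since $(L,R)$ is a Quillen pair on the underlying categories, $R(Uf) = U(Rf)$ is then a fibration (resp. acyclic fibration) in $\bC$. But, again by the lifted model structure, this says exactly that $Rf$ is a fibration (resp. acyclic fibration) in $\algin{\mcO}{\bC}$. Hence $R$ preserves fibrations and acyclic fibrations, and $(L,R)$ is a Quillen pair at the level of $\mcO$-algebras.

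I do not expect a genuine obstacle; the statement is formal. The only two points needing care are: using the ``$U$ creates'' half of Lemma \ref{lem:liftedmodel} (rather than merely ``$U$ preserves''), so that the argument runs in the required direction; and working with the commuting square of right adjoints rather than trying to show the lifted $L$ preserves cofibrations, since cofibrations of $\mcO$-algebras in the lifted model structure are not detected by $U$.
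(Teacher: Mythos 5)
Your proof is correct and follows essentially the same route as the paper's: it unwinds the one-line argument that the lifted $R$ commutes with the forgetful functors while $U$ creates fibrations and weak equivalences, so $R$ preserves (acyclic) fibrations. Your write-up just makes the diagram chase explicit.
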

\begin{proof} The result follows from the fact that the right adjoint at the level of algebras
is the same as at the level of underlying categories and fibrations and weak equivalences are created in the underlying categories.
\end{proof}

In the situation where Lemma \ref{lem:adjunctionlift} holds, 
we can look for criteria which will guarantee that a Quillen equivalence on the
original category lifts to a Quillen {\em equivalence} on
algebras. With $\mcO$-algebras replaced by non-commutative monoids, 
this is the content of  \cite[Theorem 3.12]{SchwedeShipleyMon}.
This lemma will be used to obtain most of the Quillen equivalences 
needed for the main result, Theorem \ref{thn:fullcompare}.

\begin{lemma}\label{lem:liftQE} Suppose that $L$ is a strong monoidal functor, $L$ and $R$ form a Quillen equivalence (at the level of categories $\bC$ and $\bD$) and that the hypotheses of Lemma \ref{lem:liftedmodel}   hold so
that the categories of $\mcO$--algebras have lifted cofibrantly generated model structures and the
functors lift to an adjoint pair $L$, $R$ at the level of $\mcO$--algebras.

 If  $U$ preserves cofibrant objects
then the lifted Quillen pair $L$ and $R$ at the level of $\mcO$-algebras is a Quillen
equivalence.
\end{lemma}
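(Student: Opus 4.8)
The plan is to reduce the statement to a standard criterion for when a Quillen pair is a Quillen equivalence, exploiting the fact that on the category of $\mcO$-algebras the right adjoint $R$ and the weak equivalences agree with those on the underlying categories. Recall the well-known characterisation (e.g.\ \cite[Corollary 1.3.16]{hovey}): a Quillen pair $(L, R)$ is a Quillen equivalence if and only if (a) $R$ reflects weak equivalences between fibrant objects, and (b) for every cofibrant $A \in \algin{\mcO}{\bC}$, the derived unit $A \lra R(L A)^{\mathrm{fib}}$ is a weak equivalence. Since fibrations and weak equivalences in $\algin{\mcO}{\bC}$ and $\algin{\mcO}{\bD}$ are created by $U$, both conditions can be rephrased after applying $U$.

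For condition (a): if $\phi\co X \lra Y$ is a map of fibrant $\mcO$-algebras in $\bD$ such that $R\phi$ is a weak equivalence in $\algin{\mcO}{\bC}$, then $U(R\phi) = R(U\phi)$ is a weak equivalence in $\bC$ (as $U$ creates weak equivalences and $UR = RU$). Since $UX, UY$ are fibrant in $\bD$ and $(L,R)$ is a Quillen equivalence on the underlying categories, $R$ reflects weak equivalences between fibrant objects there, so $U\phi$ is a weak equivalence in $\bD$, hence $\phi$ is a weak equivalence in $\algin{\mcO}{\bD}$.

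For condition (b): let $A \in \algin{\mcO}{\bC}$ be cofibrant and pick a fibrant replacement $LA \lra (LA)^f$ in $\algin{\mcO}{\bD}$. The derived unit is $A \lra R((LA)^f)$, and we must check $U$ sends it to a weak equivalence in $\bC$. Here is where the hypothesis \emph{$U$ preserves cofibrant objects} enters: $UA$ is then cofibrant in $\bC$. Applying $U$ to the unit and using $UR = RU$ together with $UL = LU$ (commutativity of the square of adjunctions in Lemma \ref{lem:adjunctionlift}), the map becomes $UA \lra R(U(LA)^f) = R(L(UA) \lra (L(UA))')$, where $(L(UA))' := U((LA)^f)$. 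Now $U$ creates fibrations and weak equivalences, so $L(UA) \lra (L(UA))'$ is a weak equivalence with fibrant target in $\bD$; that is, it is a fibrant replacement of $L(UA)$ in $\bD$. Thus $U$ of the derived unit at $A$ is precisely the derived unit of the underlying Quillen pair $(L,R)$ at the cofibrant object $UA$, which is a weak equivalence because $(L,R)$ is a Quillen equivalence on underlying categories. Hence the derived unit on $\mcO$-algebras is a weak equivalence, establishing (b).

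Combining (a) and (b) via Hovey's criterion gives the conclusion. \textbf{The main obstacle} I anticipate is the bookkeeping in step (b): one must be careful that "the fibrant replacement of $LA$ in $\algin{\mcO}{\bD}$, viewed through $U$" genuinely \emph{is} a fibrant replacement of $L(UA) = U(LA)$ in $\bD$ (it is, since $U$ creates both classes of maps, but this is the step where the hypothesis that $U$ preserves cofibrant objects is genuinely used, and it must be invoked cleanly). A secondary point worth a sentence is that $U$ commutes with $L$ and $R$ on the nose, which is exactly the commutative square of Lemma \ref{lem:adjunctionlift}; without it the reduction of the derived unit does not go through.
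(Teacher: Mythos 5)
Your proof is correct, but it routes through a different (equivalent) characterisation of Quillen equivalence than the paper does. The paper uses the defining property directly: for $X$ cofibrant in $\algin{\mcO}{\bC}$ and $Y$ fibrant in $\algin{\mcO}{\bD}$, it shows that $X \to RY$ is a weak equivalence iff $LX \to Y$ is, by applying $U$ (which creates weak equivalences and fibrations), using $UL = LU$ and $UR = RU$, and invoking the fact that $UX$ is cofibrant and $UY$ is fibrant so that the underlying Quillen equivalence applies. You instead reduce to Hovey's criterion (derived unit on cofibrants plus $R$ reflecting weak equivalences between fibrants), and verify each half by the same mechanism — pushing everything down via $U$. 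Both are correct; the paper's one-shot reduction is shorter and avoids the bookkeeping you flag as the ``main obstacle'' (identifying $U$ of a fibrant replacement in $\algin{\mcO}{\bD}$ as a fibrant replacement in $\bD$), since that step is absorbed into taking $Y$ to be an arbitrary fibrant algebra. Your argument is nonetheless complete, and you correctly locate where the hypothesis that $U$ preserves cofibrant objects is actually used.
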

\begin{proof}
To show that $(L,R)$ is a Quillen equivalence at the level of $\mcO$--algebras we need to show that for a
cofibrant $X$ in $\mcOalgbbC$ and a fibrant $Y$ in $\mcOalgbbD$, $X\lra R Y$ is an
equivalence if and only if $L X\lra Y$ is an equivalence.

Since weak equivalences are created in the original categories,  the first condition
is that $UX \lra UR Y=RUY $ is an equivalence in $\bC$.

Now $UY$ is fibrant because the fibrations are created by $U$ and $UX$ is cofibrant by assumption. It follows from the fact that $(L, R)$ is a
Quillen equivalence at the level of underlying categories that $UX \lra RUY$ being a weak
equivalence is equivalent to $LUX \lra UY$ being a weak equivalence.

Finally, since $L U=UL$ the latter condition  is equivalent to $UL X\lra UY$
being a weak equivalence, which finishes the proof.
\end{proof}

\section{Lifting model structures for \texorpdfstring{$G$}{G}--spectra}\label{section:liftingModels}

In this section we lift various model structures on $G\endash \mathrm{Sp}$ to
the level of algebras over an operad $\mcO$ in $G$-spaces. The main example is the left Bousfield localisation of a positive stable model structure on $\GSp$ at a $G$--spectrum $E$, denoted by $L_E(\GSp)$. We establish compatibility conditions between $E$ and $\mcO$, which ensure that the right-lifted model structure on $\mcO$--algebras from $L_E(\GSp)$  exists.

Recall that a left Bousfield localisation of a positive stable model structure on $\GSp$ at a cofibrant object $E$ is a new model structure, where cofibrations are the positive stable cofibrations and a map $f$ is an $E$--equivalence if $E\wedge f$ is a stable weak equivalence.

We assume in this section that
$\mcO$ is an $\Ninfty$--operad, thus by Definition \ref{def:Ninfty_operad} $\mcO(n)$ is universal space for a family of
subgroups of $G \times \Sn$ that contains all subgroups of the form
$H \times \{e\}$. We call this family $\mcF_n(\mcO)$.
Two primary examples of such an operad are $\Einfty^G$ and $\Einfty^1$, see Example \ref{ex:operads}.

First, we consider compatibility conditions between a $G$--spectrum $E$ and a $G$--operad $\mcO$.

\begin{definition}\label{def:O-compatible}
We define the \textbf{geometric isotropy} of a $G$-spectrum $E$ as
\[
\Ig(E) = \{ H \leqslant G \mid \Phi^H E \not\simeq \ast \}.
\]
Let $p_G \co \GSn \to G$ be the projection.
We say that a $G$-spectrum $E$ is \textbf{$\mcO$-compatible}
if for all $n \geqslant 1$, whenever $\Gamma \in \mcF_n(\mcO)$
and $p_G \Gamma \in \Ig(E)$, then $\Gamma = p_G \Gamma \times \{ e \}$.
\end{definition}

\begin{example}\label{ex:compatible}
Notice that all $G$-spectra are compatible with an $\Einfty^1$ operad.
Any free $G$-spectrum $E$ is compatible with any $N_\infty$--operad, in particular with $\Einfty^G$.
\end{example}
\begin{example}
Let $G=C_6$ and take $\mcO:=\mathcal{D}(U)$, the little disc operad for the $G$--universe $U$ generated by $\bR[C_6/C_2]$ (see \cite[Definition 3.11 (ii)]{BlumbergHillNorms} for the definition of the little disc operad). We observe that $C_6$--spectrum $E:=\Sigma^\infty C_6/C_{2+}$ is $\mcO$--compatible.

First notice, that for $H \leqslant G$ by \cite[Theorem 4.19]{BlumbergHillNorms} the admissible $H$--sets $T$ for $\mcO$ are the ones such that there exists an $H$--equivariant embedding $T \longrightarrow U$.
Thus the only $C_2$--admissible sets are the trivial ones.

Recall from \cite[Proposition 4.2]{BlumbergHillNorms} that if $\Gamma\subset G\times \Sn$ is such that $\Gamma \cap (\{e\}\times \Sn)=\{e\}$, then there is a subgroup $H$ in $G$  and a homomorphism $f:H\longrightarrow \Sn$ such that $\Gamma$ is the graph of $f$. By \cite[Definition 4.3]{BlumbergHillNorms} an $H$--set $T$ is admissible for an operad $\mcO$ if the graph of the homomorphism $\Gamma_T: H \longrightarrow \Sigma_{|T|}$ defining $H$--structure on $T$ is in $\cF_{|T|}(\mcO)$.

Since the only $C_2$-admissible sets $T$ are the trivial ones, the only $\Gamma_T: C_2 \longrightarrow \Sigma_{|T|}$ give subgroups of the form $C_2 \times \{e\}$.

This implies the condition from the definition of $\mcO$--compatibility if $\Ig(E)=\{1,C_2\}$. Thus the $C_6$--spectrum $E:=\Sigma^\infty C_6/C_{2+}$ is $\mcO$--compatible.
\end{example}

Below we consider $G$-spectra with the model structure which is the left Bousfield localisation of the positive stable model structure at $E$, denoted by $L_E(G\endash \mathrm{Sp})$.
\begin{theorem}\label{thm:algebramodelstructure}
For $E$ an \textbf{$\mcO$-compatible} $G$-spectrum, there is a cofibrantly generated model structure on
$\mcO$-algebras in $G\endash \mathrm{Sp}$ where the weak equivalences
are those maps which forget to $E$-equivalences of $G$-spectra.
We call this the \textbf{$E$-local model structure on $\mcO$-algebras in
$G$-spectra}.
Furthermore the forgetful functor sends cofibrant algebras
to cofibrant spectra in $L_EG\endash \mathrm{Sp}$.
\end{theorem}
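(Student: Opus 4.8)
The plan is to produce the asserted $E$--local model structure as a right lifting along the free--forgetful adjunction $(F_\mcO,U)$, applying Kan's lifting criterion (Lemma \ref{lem:liftedmodel}) to the cofibrantly generated model category $\bC=L_E(\GSp)$. First I would fix a set $J_E$ of generating acyclic cofibrations for $L_E(\GSp)$; as this is a left Bousfield localisation of the (cellular) positive stable model structure, its cofibrations are the positive stable cofibrations, and $J_E$ may be chosen to consist of positive stable cofibrations that are $E$--equivalences (the part of $J_E$ coming from the positive stable generating acyclic cofibrations is already covered by \cite[Proposition A.1]{BlumbergHillNorms}, and the argument below treats all of $J_E$ uniformly). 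The smallness hypothesis of Lemma \ref{lem:liftedmodel} is routine: $\GSp$ is locally presentable and $U$ preserves filtered colimits since $F_\mcO$ is assembled from $\tensor$ and the functors $(-)\tensor_{\Sigma_n}(-)$, which commute with filtered colimits. Hence the content is the second hypothesis: that $U$ sends every transfinite composite of pushouts of maps in $F_\mcO(J_E)$ to an $E$--equivalence.

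By closure of $E$--equivalences under transfinite composition I would reduce to a single pushout of $F_\mcO(j)$, for $j\co A\to B$ in $J_E$, along an arbitrary $\mcO$--algebra map $F_\mcO A\to X$, and then invoke the standard filtration of such a free--$\mcO$--algebra cell attachment (see \cite[Appendix A]{BlumbergHillNorms}): the underlying map $X\to X\cup_{F_\mcO A}F_\mcO B$ is a transfinite composite of maps $Y_{i-1}\to Y_i$, each a pushout along a map obtained by applying $\mcO(i)\tensor_{\Sigma_i}(-)$ (relative to the enveloping operad of $X$) to the $i$--fold iterated pushout--product of $j$. It then suffices to show each $Y_{i-1}\to Y_i$ forgets to an $E$--equivalence.

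The crux, to be carried out in detail in Section \ref{sec:isotropic}, is the interaction of $\mcO(i)\tensor_{\Sigma_i}(-)$ with $E$--localisation. Since $\Sigma_i$ acts freely on $\mcO(i)$ (Definition \ref{def:Ninfty_operad}), a fixed--point splitting of tom Dieck type expresses $\Phi^H$ of such a construction as a wedge over conjugacy classes of subgroups $\Gamma\leqslant H\times\Sigma_i$ belonging to $\mcF_i(\mcO)$; each such $\Gamma$ is admissible, hence the graph of a homomorphism $\alpha_\Gamma\co p_G\Gamma\to\Sigma_i$, and the $\Gamma$--summand is a norm--type term that, for nontrivial $\alpha_\Gamma$, genuinely involves a multiplicative norm. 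The role of the $\mcO$--compatibility hypothesis (Definition \ref{def:O-compatible}) is that after smashing with $E$ every such genuinely normed summand vanishes: it can only contribute if $\Phi^{p_G\Gamma}E\not\simeq\ast$, that is $p_G\Gamma\in\Ig(E)$, but $\mcO$--compatibility then forces $\Gamma=p_G\Gamma\times\{e\}$ and hence $\alpha_\Gamma$ trivial. What remains are the untwisted summands $\Gamma=H'\times\{e\}$, on which $\mcO(i)\tensor_{\Sigma_i}(-)$ reduces, after passing to $\Phi^{H'}$, to homotopy orbits along a freely acting symmetric group applied to data built from $j$ and $X$; since $j$ is an $E$--equivalence, $\Phi^{H'}E\wedge\Phi^{H'}j$ is an equivalence for $H'\in\Ig(E)$, and homotopy orbits along free actions preserve equivalences, so these summands are preserved up to equivalence by $Y_{i-1}\to Y_i$. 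Thus $E\wedge(Y_{i-1}\to Y_i)$ is a stable equivalence, the whole transfinite composite forgets to an $E$--equivalence, and Lemma \ref{lem:liftedmodel} produces the model structure. I expect this geometric--fixed--point analysis, together with verifying that $\mcO$--compatibility is precisely what kills the normed summands, to be the main obstacle.

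For the final assertion I would note that a cofibrant $\mcO$--algebra is a retract of a cell $\mcO$--algebra, and that the same filtration exhibits the underlying spectrum of a cell $\mcO$--algebra as built from $\ast$ by a transfinite composite of pushouts of maps obtained by applying $\mcO(i)\tensor_{\Sigma_i}(-)$ to iterated pushout--products of positive stable generating cofibrations. Freeness of the $\Sigma_i$--action on $\mcO(i)$, together with the defining feature of the positive stable model structure that makes smash powers of positive--cofibrant spectra suitably $\Sigma_i$--free, makes each such map a positive stable cofibration (as in \cite[Appendix A]{BlumbergHillNorms}); a transfinite composite of positive stable cofibrations out of $\ast$ is a positive stable cofibration, and these are exactly the cofibrations of $L_E(\GSp)$. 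Hence $U$ carries cofibrant $\mcO$--algebras to cofibrant objects of $L_E(\GSp)$, as required.
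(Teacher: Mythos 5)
Your proposal is correct in outline and identifies the right key idea: after the Harper--Hess filtration, the $\mcO$--compatibility hypothesis is precisely what forces the analysis to reduce to subgroups of the form $H\times\{e\}$, on which the free--algebra stage is an untwisted smash power. The technical route you take to cash in that idea, however, differs from the paper's. You propose to apply $\Phi^H$ directly to the orbit $G$--spectrum $\mcO(n)_+\sm_{\Sn}X^{\sm n}\sm E$ and decompose it by a tom~Dieck--type formula into wedge summands indexed by $\HSn$--conjugacy classes of graph subgroups $\Gamma$, then kill the twisted (normed) summands using compatibility. The paper instead never passes a geometric--fixed--points functor through the orbit: it pulls $\sm E$ inside the $\Sn$--orbit by the categorical isomorphism of Lemma~\ref{lem:orbitsandE}, uses the $\mcF_n(\mcO)$--model structure on $\GSnSp$ together with the Quillen pair $(-)/\Sn\dashv p_G^*$ of Lemma~\ref{lem:quillenorbit} to reduce to showing that $\mcO(n)_+\sm f^{\sm n}\sm E$ is a weak equivalence of $\mcF_n(\mcO)$--cofibrant $\GSn$--spectra, and only then examines $\Phi^\Gamma$ for $\Gamma\leqslant\GSn$, where compatibility limits the ``relevant'' $\Gamma$ to those of the form $H\times\{e\}$ (Lemma~\ref{lem:relevant}). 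The upshot is the same, but the paper's route avoids the need to state and justify a geometric--fixed--points--of--orbits splitting, instead replacing it with the model--categorical machinery of \cite[Section~IV.6]{mm02}. Your version would require a careful statement and proof of the tom~Dieck--type splitting at the level of orthogonal $G$--spectra, including the identification of the residual Weyl--group (homotopy) orbits on each summand; this is doable but is additional work that the paper's $\mcF$--model--structure reduction sidesteps. For the last assertion (cofibrant algebras forget to cofibrant spectra), your retract--of--cell--algebras argument is sound, but the paper simply cites \cite[Theorem~5.18]{harperhess13}, which packages exactly that statement.
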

\begin{proof}
The argument is similar to that of \cite[Proposition A.1]{BlumbergHillNorms}
which is based on the general result of \cite[Proposition 5.13]{mmss01}.
This general result has two parts, the first part is about the interaction of pushouts,
sequential colimits and $h$-cofibrations (also known as the Cofibration Hypothesis).
The second part is that every map built using pushouts (cobase extensions) and
sequential colimits from $F_\mcO J_E$ (maps of the form $F_\mcO j$, for $j$ a generating acyclic
cofibration of $L_E( \GSp)$) should be $E$-equivalences.

The first part is well known and uses a standard technique of describing
pushouts of $\mcO$-algebras as sequential colimits in underlying category,
see Harper and Hess \cite[Proposition 5.10]{harperhess13} for details of this technique.

The second part can be broken into a number of steps.
\begin{description}
\item[Step 1] If $f \co X \to Y$ is a
cofibration of cofibrant $G$-spectra that is an $E$-equivalence, then
$F_\mcO f$ is an $E$-equivalence and a $h$-cofibration of underlying $G$-spectra.

\item[Step 2] Any pushout (cobase extension) of
$F_\mcO f$ is an $E$-equivalence and a $h$-cofibration of underlying $G$-spectra.

\item[Step 3] Any sequential colimit of such maps is an $E$-equivalence.
\end{description}

Step 1 follows from Lemma \ref{lem:relevant}.
Step 2 requires us to rewrite the pushout
in the previously--mentioned manner of Harper and Hess and check that
each stage of their construction preserves $E$-equivalences. This is standard
and uses Lemma \ref{lem:orbitsandE} to show that certain constructions
are $E$-equivalences.

Step 3 is routine. A sequential colimit of a collection of maps of $\mcO$-algebras (which forget to $h$-cofibrations)
is given by the sequential colimit in $G$-spectra (this is part of the Cofibration Hypothesis).
The result holds since smashing with $E$ commutes
with sequential colimits of $G$-spectra.
It follows that the model structure exists.

The statement about cofibrant objects follows from
\cite[Theorem 5.18]{harperhess13}. 
\end{proof}

By Example \ref{ex:compatible} we have two classes of model structures
where the $E$-local model structure on $\mcO$-algebras in $G$-spectra always exists.

\begin{corollary}\label{cor:einfmodelstructure}
For $E$ a $G$-spectrum, there is a cofibrantly generated model structure on
$\einf^1$-algebras in $G\endash \mathrm{Sp}$ where the weak equivalences
are those maps which forget to $E$-equivalences of $G$-spectra.
Furthermore, the forgetful functor preserves cofibrant objects.
\end{corollary}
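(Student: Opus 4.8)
The plan is to deduce the corollary as the special case $\mcO = \einf^1$ of Theorem \ref{thm:algebramodelstructure}. First I would record that $\einf^1$ fits the standing hypothesis of this section: at level $n$ its family is $\FSn = \{ H \times \{e\} \mid H \leqslant G \}$, which consists of admissible subgroups of $\GSn$ and contains every subgroup of the form $H \times \{e\}$, so $\einf^1$ is an $\Ninfty$--operad in the sense of Definition \ref{def:Ninfty_operad}, and $\einf^1(n)$ is a universal space for $\FSn$ by construction.

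Next I would check that an arbitrary $G$--spectrum $E$ is $\einf^1$--compatible, as asserted in Example \ref{ex:compatible}. This is where the hypothesis of Theorem \ref{thm:algebramodelstructure} has to be fed in, but it holds vacuously: if $n \geqslant 1$ and $\Gamma \in \FSn$ with $p_G\Gamma \in \Ig(E)$, then by the very definition of $\FSn$ we already have $\Gamma = H \times \{e\}$ for some $H \leqslant G$, hence $\Gamma = p_G\Gamma \times \{e\}$, which is precisely the conclusion required by Definition \ref{def:O-compatible}. So no condition at all is imposed on $E$.

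Given these two observations, Theorem \ref{thm:algebramodelstructure} with $\mcO = \einf^1$ immediately supplies the cofibrantly generated model structure on $\einf^1$--algebras in $\GSp$ whose weak equivalences are exactly those maps which the forgetful functor carries to $E$--equivalences of $G$--spectra; and the last sentence of that theorem gives that the forgetful functor sends cofibrant $\einf^1$--algebras to cofibrant objects of $L_E(\GSp)$, which in particular are cofibrant $G$--spectra. I do not expect any genuine obstacle here: all of the technical content --- Steps 1--3 in the proof of Theorem \ref{thm:algebramodelstructure}, together with the input lemmas on orbits and $E$--equivalences --- has already been absorbed into that theorem. The only point needing care is the bookkeeping observation that $\einf^1$--compatibility is automatic, which is exactly the content of Example \ref{ex:compatible}.
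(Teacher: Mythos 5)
Your proposal is correct and matches the paper's own (very brief) derivation: the paper likewise deduces the corollary from Theorem \ref{thm:algebramodelstructure} by citing Example \ref{ex:compatible}, which records exactly the vacuous verification you spell out that every $G$--spectrum is $\einf^1$--compatible.
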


\begin{corollary}
For $E$ a free $G$-spectrum, there is a cofibrantly generated model structure on
$\mcO$-algebras in $G\endash \mathrm{Sp}$ where the weak equivalences
are those maps which forget to $E$-equivalences of $G$-spectra.
Furthermore, the forgetful functor preserves cofibrant objects.
\end{corollary}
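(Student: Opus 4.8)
The plan is to obtain this as a direct specialisation of Theorem~\ref{thm:algebramodelstructure}: once we know that a free $G$-spectrum is $\mcO$-compatible for every $\Ninfty$-operad $\mcO$, the existence of the $E$-local model structure on $\mcO$-algebras, the description of its weak equivalences, and the preservation of cofibrant objects by the forgetful functor are all immediate. So the only thing to check is the compatibility assertion recorded in Example~\ref{ex:compatible}.

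To verify it, I would first compute the geometric isotropy of a free $G$-spectrum $E$. Since $E$ is free, the projection $EG_+ \wedge E \to E$ is a weak equivalence; as geometric fixed points commute with smash products and $\Phi^H EG_+ \simeq \ast$ for every non-trivial $H \leqslant G$, we get $\Phi^H E \simeq \ast$ for all such $H$, and hence $\Ig(E) \subseteq \{1\}$. Now fix $n \geqslant 1$ and suppose $\Gamma \in \mcF_n(\mcO)$ with $p_G \Gamma \in \Ig(E)$. Then $p_G \Gamma = \{e\}$, so $\Gamma \subseteq \{e\} \times \Sn$; but every subgroup in an $\Ninfty$-family is admissible, i.e. it intersects $\{e\} \times \Sn$ trivially, and therefore $\Gamma = \{e \times e\} = p_G \Gamma \times \{e\}$. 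Thus the hypothesis of Definition~\ref{def:O-compatible} is satisfied and $E$ is $\mcO$-compatible.

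With this in place, applying Theorem~\ref{thm:algebramodelstructure} to the $\mcO$-compatible spectrum $E$ produces the asserted cofibrantly generated model structure on $\mcO$-algebras in $G$-spectra, with weak equivalences created by the forgetful functor into $L_E(\GSp)$, and the same theorem shows that cofibrant $\mcO$-algebras forget to cofibrant objects of $L_E(\GSp)$. There is no real obstacle: this corollary stands to Theorem~\ref{thm:algebramodelstructure} exactly as Corollary~\ref{cor:einfmodelstructure} does, the only genuine input being the elementary fact that $\Ig(E) \subseteq \{1\}$ for free $G$-spectra.
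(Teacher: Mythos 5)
Your proposal is correct and follows exactly the route the paper takes: the corollary is obtained by specialising Theorem~\ref{thm:algebramodelstructure} once one knows (as asserted in Example~\ref{ex:compatible}) that every free $G$-spectrum is $\mcO$-compatible for any $\Ninfty$-operad $\mcO$. Your argument merely fills in the short verification of that example—that $\Ig(E)\subseteq\{1\}$ for free $E$, so that the only subgroups of $\mcF_n(\mcO)$ projecting into $\Ig(E)$ are forced by admissibility to be trivial—which the paper leaves implicit.
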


\begin{remark}\label{rmk:normsandsufficient}
Requiring that a spectrum $E$ is $\mcO$-compatible seems to ensure that
the algebra structures in $E$-local $G$--spectra encoded by the operad $\mcO$  have no non-trivial norms.
Since localisation at a general $E$ has an unpredictable effect on the norm maps,
it seems unreasonable to expect more with this approach.

For specific $E$, such as $S_\bQ$ in Appendix \ref{appendix:rational},
much more can be said. In particular, while compatibility is sufficient,
the appendix shows that it is not necessary.
\end{remark}

We may apply these results to some cases of interest to our
main result. For $H$ a subgroup of $G$, let $e_{(H)_G}$ be the idempotent of
the rational Burnside ring of $G$ supported by the
$G$-conjugacy class of $H$.
Let $N$ be the normaliser of $H$,
and $e_{(H)_N}$ the analogous idempotent to $e_{(H)_G}$
in the rational Burnside ring of $N$. We let $e_1$ be the
idempotent corresponding to the trivial subgroup of $W=N/H$.
The above results show that we have lifted model structures
on each of the following categories.
\[
\begin{array}{ll}
\algin{\einf^1}{  L_{e_{(H)_G}S_{\bQ}}(G\endash \mathrm{Sp})   }
&
\algin{\einf^1}{  L_{e_{(H)_N}S_{\bQ}}(N \endash \mathrm{Sp})   }
\\
\algin{\einf^1}{   L_{e_1 S_\bQ}(W \endash \mathrm{Sp})    }
&
\algin{\einf^1}{   \mathrm{Sp}_\bQ[W]   }
\end{array}
\]
Furthermore, in each case
forgetting the algebra structure preserves cofibrant objects
(even though cofibrations will not usually be preserved).

\section{Isotropic combinatorics}\label{sec:isotropic}

In this section we complete the proof of
Theorem \ref{thm:algebramodelstructure} by
proving Lemmas \ref{lem:orbitsandE} and \ref{lem:relevant}.
Throughout this section, our model structures will be based on
the positive stable model structure on $\GSp$,
so $L_E \GSp$ will be the left Bousfield localisation of the
positive stable model structure at $E$.

Assume that $f \co X\longrightarrow Y$ is a positive cofibration of positive cofibrant $G$-spectra that
is an $E$-equivalence. We want to show that
\[
F_{\mcO}^n f \co \mcO (n)_+\sm_{\Sn} X^{\sm n}\longrightarrow \mcO (n)_+\sm_{\Sn} Y^{\sm n}
\]
is a $E$-equivalence of $G$-spectra for each $n \in \bN$, provided that
$E$ is $\mcO$-compatible (see Definition \ref{def:O-compatible}).

Before we can prove Lemma \ref{lem:relevant} we need to study the
$\Sn$-orbits functor in some detail.
We start by relating smashing with $E$ to coequalisation over $\Sn$ and show that
\[
((\mcO (n)_+\sm X^{\sm n}) \smashprod E)/\Sn
\longrightarrow
((\mcO (n)_+\sm X^{\sm n} ) /\Sn) \smashprod E
=
(\mcO (n)_+\sm_{\Sn} X^{\sm n} ) \smashprod E
\]
is an isomorphism of $G$-spectra. While obvious at the space level,
the spectrum level construction depends upon the choice of universe for
$\GSnSp$.

\begin{remark}
Whenever we consider $\GSn$--spectra we index on a $\Sn$ fixed universe,
such as the inflation of a complete $G$-universe to a $\GSn$--universe.
The indexing spaces of such a universe are $G$-representations
considered as $\GSn$-representations via the projection
$p_G: \GSn \to G$.
It follows that a $G$-spectrum $B$, inflated to a $\GSn$--spectrum
is given by simply applying the space level inflation functor
$(p_G^{*} B)(V) = p_G^{*} (B(V))$
for $V$ a $G$-representation.
\end{remark}

\begin{lemma}\label{lem:orbitsandE}
Let $A$ be a $\GSn$-spectrum and $B$ a $G$-spectrum.
Then there is a natural isomorphism of $G$-spectra
\[
(A \smashprod B)/\Sn
\longrightarrow
(A/\Sn) \smashprod B.
\]
\end{lemma}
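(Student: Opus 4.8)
The plan is to reduce the claim to a statement about naive smash products and orbits, using the fact that the $(G\times\Sigma_n)$-universe has been chosen to be $\Sigma_n$-trivial (the inflation of a complete $G$-universe). First I would recall that for $A$ a $(G\times\Sigma_n)$-spectrum indexed on such a universe and $B$ a $G$-spectrum (inflated via $p_G$ to a $(G\times\Sigma_n)$-spectrum, so that on indexing spaces it is literally $p_G^*B(V)=B(V)$), the smash product $A\smashprod B$ is again a $(G\times\Sigma_n)$-spectrum on the same universe; the orbit functor $(-)/\Sigma_n$ then lands in $G$-spectra. The key point, which I would isolate as the heart of the argument, is that because $B$ carries the \emph{trivial} $\Sigma_n$-action, the orbit functor $(-)/\Sigma_n$ commutes with $-\smashprod B$. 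I would establish this levelwise first: at each indexing space $V$ of the $G$-universe, one has a natural isomorphism of based $G$-spaces $(A(V)\smashprod B(W))/\Sigma_n \cong (A(V)/\Sigma_n)\smashprod B(W)$, which holds because $B(W)$ has trivial $\Sigma_n$-action and smashing with a fixed based space commutes with taking orbits of a based space (the orbit being a based colimit, and $-\smashprod B(W)$ a left adjoint preserving colimits, with the $\Sigma_n$-action on $B(W)$ trivial so the coequalizer diagram computing the orbit is unchanged by smashing).

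Next I would assemble these levelwise isomorphisms into an isomorphism of spectra. Since the $(G\times\Sigma_n)$-universe is $\Sigma_n$-trivial, a $(G\times\Sigma_n)$-spectrum indexed on it is the same data as a $G$-spectrum together with a $\Sigma_n$-action through spectrum maps, and the orbit spectrum $A/\Sigma_n$ is formed by taking orbits levelwise: $(A/\Sigma_n)(V)=A(V)/\Sigma_n$, with structure maps induced from those of $A$ (these descend to orbits precisely because the universe indexing is $\Sigma_n$-fixed, so the structure maps $\sigma^V_W\colon A(V)\smashprod S^{W-V}\to A(W)$ are $\Sigma_n$-equivariant with $\Sigma_n$ acting trivially on $S^{W-V}$). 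Likewise $(A\smashprod B)(V) = A(V)\smashprod B(V)$ for the external smash followed by internalization along the universe; I would check that the structure maps of $A\smashprod B$ are compatible under the levelwise isomorphisms with those of $(A/\Sigma_n)\smashprod B$, which is a diagram-chase using naturality of the levelwise isomorphism and the compatibility of the structure maps of $A$, $A/\Sigma_n$, and $B$. Naturality in $A$ and $B$ is then immediate from the levelwise naturality.

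The main obstacle I anticipate is purely bookkeeping rather than conceptual: making precise the claim that over a $\Sigma_n$-trivial universe the orbit functor is computed levelwise and interacts correctly with the (internalized) smash product, i.e.\ that ``obvious at the space level'' genuinely passes to spectra. One has to be careful that the smash product of orthogonal $G$-spectra is not literally a levelwise smash but involves a left Kan extension / coend over the indexing category; the reason the argument still goes through is that $-\smashprod B$ (for $B$ inflated from $G$) and $(-)/\Sigma_n$ are both left adjoints, hence both preserve the relevant colimits, and with the $\Sigma_n$-action on $B$ trivial the two colimits can be interchanged. I would phrase this as: $(-\smashprod B)\circ(-/\Sigma_n)$ and $(-/\Sigma_n)\circ(-\smashprod B)$ are both left adjoints agreeing on representable (free) $(G\times\Sigma_n)$-spectra, hence naturally isomorphic; alternatively one writes the orbit as a colimit over the one-object groupoid $B\Sigma_n$ and uses that the smash with the inflated $B$ is a $B\Sigma_n$-indexed colimit-preserving functor whose value on the trivial $\Sigma_n$-object is unaffected. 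Either way, once the bookkeeping is set up the isomorphism is forced and its naturality is automatic.
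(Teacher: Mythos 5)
Your proposal is correct and hinges on exactly the same key observation as the paper's proof: the $(G\times\Sigma_n)$-universe is chosen $\Sigma_n$-fixed, so that the orbit functor is computed levelwise and can be slid past the smash product when the second factor carries trivial $\Sigma_n$-action. You also correctly identify the one genuine subtlety, namely that the smash product of orthogonal spectra is a coend rather than a levelwise smash, so a naive levelwise argument does not literally suffice.

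Where you diverge from the paper is in the endgame. The paper resolves the coend issue concretely: it writes out $(A\,\bar{\smashprod}\,B)(U) = \int^{V,W} A(V)\smashprod B(W)\smashprod \mcL(V\oplus W, U)$, observes that $V$, $W$, $U$ are all $\Sigma_n$-trivial, and moves $(-)/\Sigma_n$ through the coend and the space-level smashes directly, then notes this passes to the internalized smash. You instead invoke the abstract principle that $(-\smashprod B)\circ(-/\Sigma_n)$ and $(-/\Sigma_n)\circ(-\smashprod B)$ are both cocontinuous left adjoints (to $p_G^*F(B,-)$ and $F(p_G^*B,p_G^*(-))$ respectively, which agree because inflation commutes with mapping spectra), hence are determined by their restriction to free spectra. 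That route is clean and conceptually appealing, but note it does not avoid work so much as relocate it: checking the two composites agree on a free spectrum $F_V(T)$ still requires the same space-level identity $(T\smashprod B(W))/\Sigma_n\cong (T/\Sigma_n)\smashprod B(W)$ together with the $\Sigma_n$-triviality of $V$, so the content is the same. Either version would serve; the paper's explicit coend manipulation is shorter to write in full, while yours makes the role of cocontinuity more visible.

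One small inaccuracy worth fixing in your write-up: you write ``$(A\smashprod B)(V) = A(V)\smashprod B(V)$ for the external smash followed by internalization,'' which is not an identity of spaces (the internalized smash at level $V$ is a coend/Kan extension, not the level-$V$ smash). You walk this back a sentence later, but the earlier line should not be stated as an equality.
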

\begin{proof}
Let $U$ be an indexing space for $G$-spectra, then we can consider the smash product
before coequalising over $\bS$, denoted $\bar{\smashprod}$.
\[\begin{array}{rl}
\big( (A \bar{\smashprod} B)/\Sn \big) (U)
= & (A \smashprod B)(U) /\Sn  \\
= & \left( \int^{V, W} A(V) \smashprod
B(W) \smashprod \mcL(V \oplus W, U)   \right)/\Sn \\
= & \left( \int^{V, W} A(V)/\Sn \smashprod
B(W) \smashprod \mcL(V \oplus W, U)   \right) \\
= & (A/\Sn \bar{\smashprod} B)(U)
\end{array}
\]
This depends on the representations $V$, $W$ and $U$ all being
$\Sn$-fixed, so that we can move the $\Sn$-orbits functor past the
coend and the smash products of spaces. It follows that this isomorphism
passes to the actual smash product of orthogonal spectra.
\end{proof}

Whilst this lemma helps with the categorical nature of $\Sn$, we still need to consider
the homotopical properties of $(-)/\Sn$ and understand
when $(-)/\Sn$ preserve weak equivalences.
Recall the $\mcF$--model structure on $\GSnSp$ for $\mcF$ a family of subgroups of $\GSn$
from \cite[Section IV.6]{mm02}. Here the weak equivalences are given by taking $\Gamma$--fixed points
for each $\Gamma \in \mcF$ and are called $\mcF$--equivalences.
The cofibrations are the positive cofibrations
constructed from space level cofibrations
but where one uses only the homogeneous spaces $(\GSn)/\Gamma_+$ for $\Gamma \in \mcF$.
This model structure is denoted $\mcF \GSnSp$.

We use these definitions in the following lemma, which is given
so that we can later clarify why we need some relation between the spectrum $E$ and the operad
$\mcO$ in order to get a model structure $\algin{\mcO}{L_E \GSp}$, right-lifted from $L_E \GSp$.
We state the lemma for the positive model structures, but it also holds in the non-positive case.

\begin{lemma}\label{lem:quillenorbit}
Let $\mcF$ be a family of subgroups of $\GSnSp$, such that for any $H\leqslant G$, $H \times \{e\} \in \mcF$. Then there is a
Quillen pair
\[
\adjunction{(-)/\Sn}{\mcF\GSnSp}{\GSp}{{p_G}^*}.
\]
The right adjoint $p_G^*$ preserves positive cofibrations
and the left adjoint $(-)/\Sn$ sends $\mcF$--equivalences between
positive $\mcF$--cofibrant objects to weak equivalences of $G$--spectra.
\end{lemma}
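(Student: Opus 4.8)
The plan is to exhibit $(-)/\Sn$ and $p_G^*$ as the orbit--inflation adjunction attached to the quotient homomorphism $p_G \co \GSn \to \GSn/(\{e\}\times\Sn) = G$. On based $\GSn$--spaces, inflation $p_G^*$ fits into a string of adjoints $(-)/\Sn \dashv p_G^* \dashv (-)^{\Sn}$; in particular $p_G^*$ preserves all colimits (and limits). Because $\GSn$--spectra are indexed on a $\Sn$--fixed universe inflated from a $G$--universe (the Remark preceding Lemma~\ref{lem:orbitsandE}), $p_G^*$ acts levelwise and this space--level adjunction passes verbatim to orthogonal spectra, yielding the adjoint pair $(-)/\Sn : \GSnSp \rightleftarrows \GSp : p_G^*$. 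Since $p_G^*$ is itself a left adjoint it preserves colimits, so to see that it preserves positive cofibrations it is enough to check on generators: using $p_G^*\circ F_V^G = F_V^{\GSn}\circ p_G^*$ (for $V$ a $\Sn$--fixed representation) and $p_G^*(G/K)=\GSn/(K\times\Sn)$, a generating positive cofibration $F_V^G((G/K)_+\sm i)$ goes to $F_V^{\GSn}((\GSn/(K\times\Sn))_+\sm i)$, which is again a positive cofibration of $\GSnSp$.

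For the Quillen pair I would verify that the left adjoint $(-)/\Sn : \mcF\GSnSp \to \GSp$ is left Quillen by testing it on the generating cofibrations and generating acyclic cofibrations of $\mcF\GSnSp$. Dually to the above, $(-)/\Sn\circ F_V^{\GSn} = F_V^G\circ(-)/\Sn$ on based spaces whenever $V$ is $\Sn$--fixed. A generating $\mcF$--cofibration $F_V^{\GSn}((\GSn/\Gamma)_+\sm i)$, $\Gamma\in\mcF$, is therefore sent to $F_V^G(((\GSn/\Gamma)/\Sn)_+\sm i)$; but $(\GSn/\Gamma)/\Sn = \Sn\backslash(\GSn/\Gamma)$ is a finite $G$--set, hence a finite coproduct of orbits $\coprod_k G/K_k$, so the image is the finite wedge $\bigvee_k F_V^G((G/K_k)_+\sm i)$, a positive cofibration of $\GSp$. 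Running the same computation on the level generating acyclic cofibrations lands in finite wedges of generating acyclic cofibrations for the positive level model structure on $\GSp$, and these are in particular stable acyclic cofibrations.

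It remains to treat the stable generating acyclic cofibrations of $\mcF\GSnSp$, which are assembled from the stabilisation maps $\lambda_{V,W} \co F_{V\oplus W}^{\GSn}S^W \to F_V^{\GSn}S^0$ (with $V, W$ $\Sn$--fixed) by taking mapping cylinders and pushout--products with the generating $\mcF$--cofibrations. Since $W$ is $\Sn$--fixed, $S^W$ carries the trivial $\Sn$--action and the free spectra $F_{V\oplus W}^{\GSn}S^W$, $F_V^{\GSn}S^0$ are inflated from $\GSp$; thus $\lambda_{V,W} = p_G^*(\lambda_{V,W}^G)$. Now Lemma~\ref{lem:orbitsandE} identifies $(-)/\Sn$ of a smash product $p_G^*(A)\sm B$ with $A\sm(B/\Sn)$, and $(-)/\Sn$ preserves pushouts; combined with the computation of the previous paragraph, this shows that $(-)/\Sn$ carries these generators to finite wedges of pushout--products of $\lambda_{V,W}^G$--mapping cylinders with generating cofibrations of $\GSp$, i.e.\ to acyclic cofibrations of $\GSp$. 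Hence $(-)/\Sn$ is left Quillen and $\big((-)/\Sn, p_G^*\big)$ is a Quillen pair. The last assertion is then immediate: the $\mcF$--equivalences and the positive $\mcF$--cofibrant objects are precisely the weak equivalences and cofibrant objects of $\mcF\GSnSp$, so Ken Brown's lemma applied to the left Quillen functor $(-)/\Sn$ gives that it sends $\mcF$--equivalences between positive $\mcF$--cofibrant objects to weak equivalences of $\GSp$.

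The step I expect to be the real work is the stable part of the Quillen--pair argument: one must pin down the precise description of the generating acyclic cofibrations of $\mcF\GSnSp$ built from the $\lambda_{V,W}$ and check carefully that $(-)/\Sn$ commutes with all of the constructions (the free functors, mapping cylinders and pushout--products) assembling them. What makes this tractable is the $\Sn$--fixed indexing convention --- it renders the free functors $F_V$ and the representation spheres $S^W$ ``constant in the $\Sn$--direction'', so that the orbit functor commutes with everything in sight --- together with the hypothesis $\{H\times\{e\}\st H\leqslant G\}\subseteq\mcF$, which is what guarantees the model structure $\mcF\GSnSp$ and the orbit computations above behave as claimed. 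The adjunction itself, the behaviour on the level generators, and the final Ken Brown step are all routine.
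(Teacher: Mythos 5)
Your proof is correct, but it takes a considerably more elaborate route than the paper. The paper verifies the Quillen pair condition on the \emph{right} adjoint: since at the space level $(p_G^* A)^\Gamma = A^{p_G\Gamma}$, and (acyclic) fibrations in both positive stable model structures are detected by conditions on fixed points (for $H\leqslant G$ on one side, for $\Gamma\in\mcF$ on the other), it is immediate that $p_G^*$ sends (acyclic) fibrations to (acyclic) fibrations --- one line, no generating sets needed. You instead verify the dual condition on the \emph{left} adjoint $(-)/\Sn$ by pushing the generating $\mcF$--cofibrations and stable generating acyclic cofibrations through the orbit functor, using the $\Sn$--fixed indexing, the double--coset decomposition of $(\GSn/\Gamma)/\Sn$, and Lemma~\ref{lem:orbitsandE} to peel the inflated factor off a pushout--product. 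All of these steps check out (in particular, finiteness of $G$ is needed to decompose $\Sn\backslash(\GSn/\Gamma)$ into finitely many orbits, and that hypothesis is in force throughout the paper). What your approach buys is an explicit description of where $(-)/\Sn$ sends the generators, which could be useful elsewhere; what it costs is precisely the ``real work'' you flag at the end, which the fixed--point formula on the right adjoint lets one avoid entirely. The proof of the second assertion (that $p_G^*$ preserves positive cofibrations) and the closing Ken Brown step agree with the paper; you and the paper both identify $p_G^*G/K_+ \cong (\GSn)/(K\times\{e\})_+$ as the key input, and you both invoke the hypothesis $\{H\times\{e\}\}\subseteq\mcF$ at exactly that point.
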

\begin{proof}
This is a Quillen pair as $p_G^*$ preserves fibrations and acyclic fibrations
since (at the level of spaces) $(p_G^* A)^{\Gamma} = A^{p_G \Gamma}$.
The statement about $(-)/\Sn$ is then immediate from Ken Brown's Lemma.
It follows from the isomorphism
\[
p_G^* G/H_+ \cong (\GSn)/(H \times \{e\})_+
\]
and the assumption $H \times \{e\} \in \mcF$,
that $p_G^*$ preserves positive cofibrations.
\end{proof}

This lemma has some simple consequences. Consider a map
$f \co X \to Y$ of $\GSn$-spectra which are
(positive) cofibrant in the $\mcF$-model structure for some family $\mcF$.
Then to see that $f/\Sn$ is a weak equivalence in $\GSp$,
one only has to check the $\Gamma$--fixed points of $f$ are weak equivalences of spectra
for $\Gamma$ in $\mcF$.
Hence for a fixed $f$, the more subgroups are needed in $\mcF$ to make $X$ and $Y$ cofibrant
the more fixed points we need to check. In particular, if $\mcF= \{ H \times \{e\} \mid H \leqslant G \}$
then one only has to check the $H \times \{e\}$-fixed points for varying $H$. That is,
one only has to check that $f$ is a weak equivalence of $G$-spectra after forgetting the $\Sn$--action.

We can now reduce our problem substantially.
Let $f \co X \to Y$ be a positive cofibration of
positive cofibrant $G$--spectra and an $E$-equivalence. We want to show that
\[
F_{\mcO}^n f \co \mcO (n)_+\sm_{\Sn} X^{\sm n}\longrightarrow \mcO (n)_+\sm_{\Sn} Y^{\sm n}
\]
is a weak equivalence of $G$--spectra.
Lemmas \ref{lem:orbitsandE} and \ref{lem:quillenorbit} show that it suffices to prove
that
\[
\mcO (n)_+\sm f^{\sm n} \sm E:
\mcO (n)_+\sm X^{\sm n} \sm E
\longrightarrow
\mcO (n)_+\sm Y^{\sm n} \sm E
\]
is an $\mcF_n(\mcO)$--equivalence of $\mcF_n(\mcO)$--cofibrant $\GSn$ spectra.
The filtration of Harper and Hess \cite[Proposition 5.10]{harperhess13} shows that
$X^{\sm n}$ is a positive cofibrant $\GSn$-spectrum.
Lemma \ref{lem:quillenorbit} shows that $E$,
inflated to a $\GSn$-spectrum is also cofibrant.
Hence \cite[Lemma IV.6.6]{mm02}
tells us that $\mcO (n)_+ \sm X^{\sm n} \smashprod E$ is
an $\mcF_n(\mcO)$--cofibrant $\GSn$ spectrum.
To see that $F_{\mcO}^n f$ is an $E$-equivalence,
we must check that $\mcO (n)_+\sm f^{\sm n} \sm E$ is an
$\mcF_n(\mcO)$--equivalence. In fact, this is equivalent to checking that
$\mcO (n)_+\sm f^{\sm n} \sm E$ is a weak equivalence of $\GSnSp$ by
\cite[Proposition IV.6.7]{mm02}.
To show this last statement holds,
we use geometric fixed points and see how the
geometric isotropy subgroups of $E$ interact with $\mcF_n(\mcO)$.

\begin{definition}
Let us say that a subgroup $\Gamma \leqslant \GSn$ is
\textbf{irrelevant} if the $\Phi^\Gamma$-fixed
points of $\mcO(n)_+$ or $p_G^*E$ are trivial and \textbf{relevant} otherwise.
\end{definition}
If $E$ is $\mcO$--compatible, then the only relevant subgroups are of the form
$H \times \{ e \}$.
Let $p_G \co \GSn \to G$ be the projection, then
for $A \leqslant \GSn$, the $A$-geometric fixed points of $E$ inflated to a $\GSn$ spectrum
are given by $\Phi^{p_G(A)} E$. The next lemma is then immediate.

\begin{lemma}
The relevant subgroups for $\mcO (n)_+$ and $E$  lie in
\[
\pushQED{\qed}
\mcF_n(\mcO)\cap \GI_{\GSn}(p_G^*E)=\{ A \st A\in \mcF_n(\mcO), \ p_G(A)\in
\GI_G(E)\}.
\qedhere
\popQED
\]
\end{lemma}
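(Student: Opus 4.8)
The plan is to unwind the two conditions defining when a subgroup $\Gamma\leqslant\GSn$ is \textbf{relevant} and match them against membership in the set on the far right; given the inflation identity recorded immediately before the lemma, this is pure bookkeeping, which is exactly why the text can call it immediate. First I would dispose of the operad factor. Since $\mcO$ is an $\Ninfty$-operad, $\mcO(n)$ is a universal space for the family $\mcF_n(\mcO)$, so $\mcO(n)^\Gamma$ is a single point if $\Gamma\in\mcF_n(\mcO)$ and empty otherwise. On the based space $\mcO(n)_+$ geometric fixed points are just genuine fixed points (and compatible with forming suspension spectra), so $\Phi^\Gamma(\mcO(n)_+)$ is trivial precisely when $\mcO(n)^\Gamma=\emptyset$, i.e.\ precisely when $\Gamma\notin\mcF_n(\mcO)$. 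Hence ``$\Phi^\Gamma\mcO(n)_+$ nontrivial'' is equivalent to ``$\Gamma\in\mcF_n(\mcO)$''.

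Next I would dispose of the spectrum factor. The only input here is the identity $\Phi^\Gamma(p_G^*E)\simeq\Phi^{p_G(\Gamma)}E$ stated just above, which itself rests on the Remark that $\GSn$-spectra are indexed on a $\Sn$-fixed universe, so that inflation along $p_G$ is applied spacewise, together with the standard compatibility of geometric fixed points with inflation. By the definition of geometric isotropy, $\Phi^{p_G(\Gamma)}E\not\simeq\ast$ says exactly that $p_G(\Gamma)\in\Ig(E)$, so ``$\Phi^\Gamma p_G^*E$ nontrivial'' is equivalent to ``$p_G(\Gamma)\in\Ig(E)$''.

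Combining the two equivalences, $\Gamma$ is relevant if and only if $\Gamma\in\mcF_n(\mcO)$ and $p_G(\Gamma)\in\Ig(E)$; this is precisely the description of the set on the far right, so in particular every relevant subgroup lies in it. Finally, the displayed equality of the two descriptions of this set is the same inflation identity once more: $A\in\GI_{\GSn}(p_G^*E)$ means $\Phi^A(p_G^*E)\not\simeq\ast$, i.e.\ $\Phi^{p_G(A)}E\not\simeq\ast$, i.e.\ $p_G(A)\in\Ig(E)$, so $\mcF_n(\mcO)\cap\GI_{\GSn}(p_G^*E)=\{A\mid A\in\mcF_n(\mcO),\ p_G(A)\in\Ig(E)\}$. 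The only step carrying any real content is the inflation identity for spectra, and since it has already been established in the paragraphs preceding the lemma, nothing here presents a genuine obstacle.
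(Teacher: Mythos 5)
Your proof is correct and follows exactly the same route the paper intends: unwind the definition of a relevant subgroup using (i) the characterisation of fixed points of a universal space for $\mcF_n(\mcO)$ and (ii) the inflation identity $\Phi^A(p_G^*E)\simeq\Phi^{p_G(A)}E$ recorded just before the lemma. The paper leaves this as an immediate $\qed$; you have simply spelled out the bookkeeping, including the observation that the displayed equality of the two descriptions of the target set is itself the inflation identity.
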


\begin{lemma}\label{lem:relevant}
Let $f \co X \to Y$ be a positive cofibration between
positive cofibrant $G$--spectra and an $E$-equivalence.
Assume that $E$ is $\mcO$--compatible, then
\[
\mcO (n)_+\sm f^{\sm n} : \mcO (n)_+\sm X^{\sm n} \longrightarrow \mcO (n)_+\sm Y^{\sm n}
\]
is an $E$-equivalence.
\end{lemma}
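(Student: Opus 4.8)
The plan is to reduce the statement to a computation with geometric fixed points and then invoke $\mcO$-compatibility to kill all the troublesome isotropy. First I would use Lemma \ref{lem:orbitsandE} to move the smash with $E$ past the $\Sn$-orbits, so that checking $F_\mcO^n f$ is an $E$-equivalence of $G$-spectra is the same as checking that $(\mcO(n)_+ \sm f^{\sm n} \sm E)/\Sn$ is a weak equivalence of $G$-spectra. Then I would apply Lemma \ref{lem:quillenorbit} with the family $\mcF_n(\mcO)$ (which, being the family of an $\Ninfty$-operad, contains all $H \times \{e\}$, so the lemma applies): it suffices to show that $\mcO(n)_+ \sm f^{\sm n} \sm E$ is an $\mcF_n(\mcO)$-equivalence between $\mcF_n(\mcO)$-cofibrant $\GSn$-spectra. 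The cofibrancy statements are exactly what the paragraph before the lemma assembles: $X^{\sm n}$ and $Y^{\sm n}$ are positive cofibrant $\GSn$-spectra by the Harper--Hess filtration \cite[Proposition 5.10]{harperhess13}, $E$ inflated along $p_G$ is cofibrant by Lemma \ref{lem:quillenorbit}, and then \cite[Lemma IV.6.6]{mm02} gives that smashing with $\mcO(n)_+$ lands in $\mcF_n(\mcO)$-cofibrant objects.

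Next, by \cite[Proposition IV.6.7]{mm02} it is enough to show that $\mcO(n)_+ \sm f^{\sm n} \sm E$ is in fact a weak equivalence of $\GSnSp$ (with respect to a complete $\Sn$-fixed universe), and for this I would detect weak equivalences via geometric fixed points $\Phi^\Gamma$ for all $\Gamma \leqslant \GSn$. For a given $\Gamma$, if $\Gamma$ is irrelevant — that is, $\Phi^\Gamma(\mcO(n)_+) \simeq \ast$ or $\Phi^\Gamma(p_G^* E) \simeq \ast$ — then $\Phi^\Gamma$ of both source and target is trivial (using that $\Phi^\Gamma$ is monoidal and commutes with smash), so there is nothing to check. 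Hence only relevant $\Gamma$ matter, and the lemma immediately preceding shows these all lie in $\mcF_n(\mcO) \cap \GI_{\GSn}(p_G^* E)$; since $E$ is $\mcO$-compatible, every such $\Gamma$ has the form $H \times \{e\}$ with $H = p_G\Gamma \in \Ig(E)$.

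For such $\Gamma = H \times \{e\}$, the geometric fixed points untwist: $\Phi^{H\times\{e\}}$ of a smash $\mcO(n)_+ \sm Z^{\sm n} \sm E$ with $Z$ a $G$-spectrum inflated along $p_G$ is computed by passing to $H$-geometric fixed points after forgetting the $\Sn$-action, giving $\Phi^H(\mcO(n)_+) \sm \Phi^H(Z)^{\sm n} \sm \Phi^H E$ (the $\Sn$ acts trivially on the relevant strata here because $\Gamma \cap (\{e\}\times\Sn) = \{e\}$ forces the $\Sn$-coordinate to be trivial). Thus $\Phi^{H\times\{e\}}(\mcO(n)_+ \sm f^{\sm n}\sm E)$ is obtained from $\Phi^H(f) \sm \Phi^H E$, up to smashing with the fixed object $\Phi^H(\mcO(n)_+)$ and taking $n$-fold smash powers of a map of cofibrant spectra; since $f$ is an $E$-equivalence of $G$-spectra, $\Phi^H(f \sm E) = \Phi^H f \sm \Phi^H E$ is a weak equivalence, and smashing a weak equivalence of cofibrant spectra with any cofibrant spectrum and forming smash powers preserves weak equivalences. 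This shows $\Phi^\Gamma(\mcO(n)_+\sm f^{\sm n}\sm E)$ is a weak equivalence for all $\Gamma$, completing the argument.

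The main obstacle I expect is the bookkeeping in the last step: one must be careful that "inflate along $p_G$, then smash, then take $\Gamma$-geometric fixed points" really does untwist to a non-equivariant smash of $\Phi^H$-fixed points, and that the $n$-fold smash power of the $E$-equivalence $f$ interacts correctly with cofibrancy so that Ken Brown's lemma (or a direct cofibrant-replacement argument) applies. The conceptual content — that $\mcO$-compatibility is exactly the condition ensuring only graph subgroups of the form $H\times\{e\}$ survive, where the $\Sn$-action is invisible to geometric fixed points — is the heart of the matter, and everything else is standard equivariant stable homotopy theory built on \cite{mm02} and \cite{harperhess13}.
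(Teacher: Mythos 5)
Your proposal is correct and follows essentially the same route as the paper: the reduction via Lemmas \ref{lem:orbitsandE} and \ref{lem:quillenorbit} together with \cite[Lemma IV.6.6, Proposition IV.6.7]{mm02} is carried out in the discussion immediately preceding the lemma, and the proof itself is the geometric fixed-point computation at the relevant subgroups $H\times\{e\}$, using $\Phi^{H\times\{e\}}(X^{\sm n})=(\Phi^H X)^{\sm n}$ and that smashing cofibrant spectra preserves $\Phi^H E$-equivalences. The ``untwisting'' you flag as the main obstacle is exactly the step the paper also singles out, and it does go through for the reason you give (the $\Sn$-coordinate of $H\times\{e\}$ is trivial, so the permutation action is invisible to the fixed points).
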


\begin{proof}
We only need to consider the relevant subgroups,
as both sides are trivial at an irrelevant subgroup.
By the compatibility condition, any relevant subgroup is of the form 
 $H \times \{ e \}$.
If we view $X^{\sm n}$ just as a $H \times \{ e \}$-spectrum then $X^{\sm n}$ is
just a smash product of copies of $X$ and we have
$\Phi^{H \times \{e\}}(X^{\sm  n})=
(\Phi^H X)^{\sm n}$. Hence
$$\Phi^{H\times \{ e \} }(\mcO (n)_+\sm X^{\sm n}\sm E) \simeq \Phi^{H\times
  \{ e \} }(\mcO (n)_+)\sm (\Phi^HX)^{\sm n}\sm \Phi^HE. $$
If $f: X \longrightarrow Y$ is an $E$-equivalence, $\Phi^HX\longrightarrow \Phi^HY$ is an
$\Phi^H E$-equivalence. Moreover $\Phi^H$ preserves cofibrations
and the smash product of $E'$-equivalences of cofibrant objects
is an $E'$-equivalence for any suitable spectrum $E'$, such as $\Phi^H E$.
\end{proof}

\section{Lifting the Quillen equivalences}\label{sec:liftingQE}

In this section we construct our sequence of Quillen equivalences between $E_\infty^1$-algebras in
rational $G$-spectra and commutative algebras in the algebraic model. The strategy is to replace
$E_\infty^1$ by $\Comm$ as soon as we have moved from
$L_{e_{(H)_G}S_{\bQ}}(G\endash \mathrm{Sp})$ to $W_G H$-objects in spectra. We then apply the results of
\cite{richtershipley} to compare commutative algebras in rational spectra to
commutative algebras in rational chain complexes.

Before we check the compatibility of the zig-zag of Quillen equivalences from \cite{KedziorekExceptional} with algebras over $\Einfty^1$-algebras we note that this zig-zag of Quillen equivalences works for positive stable model structures.

\begin{remark} The model structures and Quillen equivalences of \cite{KedziorekExceptional}
relating rational $G$-spectra to the algebraic model can all be obtained using the
positive stable model structure on the various categories of spectra.
Moreover, the adjunction
\[
\adjunction{H\bQ \wedge -}{Sp_\bQ^\Sigma[W] }{(H\bQ \leftmod)[W]}{U}
\]
is a Quillen equivalence when the left side is considered with the positive stable model structure and the right hand side is considered with the positive flat stable model structure of \cite{Shipley_convenient}. This model structure on $(H\bQ \leftmod)[W]$ is the starting point for the work in \cite{richtershipley}.
\end{remark}

The results of Section \ref{section:liftingModels} allow us to apply
Lemmas \ref{lem:adjunctionlift} and \ref{lem:liftQE}
to obtain Quillen equivalences
\[
\xymatrix@R=2pc{
\algin{\einf^1}{  L_{e_{(H)_G}S_{\bQ}}(G\endash \mathrm{Sp})   }
\ar@<-1ex>[d]_{i^{*}}
\\
\algin{\einf^1}{  L_{e_{(H)_N}S_{\bQ}}(N \endash \mathrm{Sp})   }
\ar@<-1ex>[u]_{F_N(G_+,-)}
\ar@<+1ex>[d]^{(-)^H}
\\
\algin{\einf^1}{   L_{e_1S_\bQ}(W_G H \endash \mathrm{Sp})    }
\ar@<+1ex>[u]^{ \epsilon^\ast}
\ar@<+1ex>[d]^{\mathrm{res}}
\\
\algin{\einf^1}{   \mathrm{Sp}_\bQ[W_G H]  . }
\ar@<+1ex>[u]^{\mathrm{I}_t^c}
}
\]

Now we change the operad from the $\einf^1$ to $\Comm$ obtaining a Quillen equivalence.
\begin{lemma}\label{lem:einftocomm}
There is an adjunction
\[
\adjunction{\eta_*}
{\algin{\einf^1}{   \spO_\bQ[W]   }}
{\algin{\Comm}{   \spO_\bQ[W]   }}
{\eta^*}
\]
induced by the map of operads in $\Top_*$, $\eta: \einf^1 \lra \Comm$. This adjunction is a Quillen equivalence with respect to right-induced model structures from the right induced model structure on $W$-objects from the rational positive stable model structure on $\spO_\bQ$.
\end{lemma}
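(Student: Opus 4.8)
The plan is to realise the change-of-operads adjunction $(\eta_*,\eta^*)$ as a Quillen equivalence by reducing to a rational splitting of symmetric powers and then running a cellular induction. The adjunction exists by the usual operad formalism, with $\eta^*$ the restriction functor along $\eta$ and $\eta_*A=\Comm\circ_{\einf^1}A$ the relative composition product. Since $\eta^*$ is the identity on underlying $W$--spectra, and both model structures in question are right--induced from the rational positive stable model structure on $\spO_\bQ[W]$ — the one on $\algin{\einf^1}{\spO_\bQ[W]}$ by the results of Section \ref{section:liftingModels} and the one on $\algin{\Comm}{\spO_\bQ[W]}$ by Appendix \ref{appendix:rational} (in its $W$--diagram version) — the functor $\eta^*$ preserves and reflects both weak equivalences and fibrations. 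Hence $(\eta_*,\eta^*)$ is automatically a Quillen pair and $\eta^*$ is homotopical and reflects all weak equivalences. For such a Quillen pair it is standard that it is a Quillen equivalence exactly when the unit $A\longrightarrow \eta^*\eta_*A$ is a weak equivalence on every cofibrant object (no fibrant replacement is needed since $\eta^*$ is homotopical). So it suffices to prove that $A\longrightarrow\eta^*\eta_*A$ is a rational equivalence for every cofibrant $\einf^1$--algebra $A$.

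The homotopical input I would isolate first is the following rational splitting: if $Z$ is a positive cofibrant $(\Sigma_q\times W)$--spectrum on which $\Sigma_q$ acts freely — for instance a $q$--fold smash power in $\spO_\bQ[W]$ of a positive cofibrant object — then the canonical map
\[
\einf^1(q)_+\sm_{\Sigma_q}Z\longrightarrow Z/\Sigma_q
\]
is a rational equivalence of $W$--spectra. Indeed $\einf^1(q)$ is a free contractible $\Sigma_q$--space, so the left-hand side computes the homotopy orbits $Z_{h\Sigma_q}$; and on rational homology both sides are computed from the complex of $\bQ[\Sigma_q]$--modules $C_*(Z;\bQ)$, which is a complex of projectives because $\bQ[\Sigma_q]$ is semisimple, so that $(-)_{\Sigma_q}$ is exact and $H_*(Z_{h\Sigma_q};\bQ)\cong H_*(Z;\bQ)_{\Sigma_q}\cong H_*(Z/\Sigma_q;\bQ)$ compatibly with the map. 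This is the only point at which rationality is used, and it is exactly the feature that is lost for the genuine norms encoded by $\eginf$.

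Finally I would run the cellular induction. A cofibrant $\einf^1$--algebra is a retract of one built from the initial algebra by a transfinite composition of pushouts of maps $F_{\einf^1}(i)$, with $i$ a generating cofibration of the rational positive stable model structure on $\spO_\bQ[W]$; as $\eta_*$ commutes with all these colimits, and as retracts and transfinite composites of $h$--cofibrations which are rational equivalences are again rational equivalences (the Cofibration Hypothesis used in the proof of Theorem \ref{thm:algebramodelstructure} applies here), it is enough to treat a single pushout
\[
A_{\beta+1}\;=\;A_\beta\sqcup_{F_{\einf^1}(K)}F_{\einf^1}(L)
\]
under the inductive hypothesis that $A_\beta\longrightarrow\eta^*\eta_*A_\beta$ is a rational equivalence, where $K\longrightarrow L$ is a generating cofibration. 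Applying the left adjoint $\eta_*$ gives $\eta_*A_{\beta+1}=\eta_*A_\beta\sqcup_{F_{\Comm}(K)}F_{\Comm}(L)$, and I would compare the Harper--Hess filtrations of the two pushouts exactly as in the proof of Theorem \ref{thm:algebramodelstructure}: in each case the $q$--th layer is formed by a pushout whose attaching object is, respectively, $\einf^1(q)_+\sm_{\Sigma_q}(-)$ and $(-)/\Sigma_q$ applied to one and the same $(\Sigma_q\times W)$--spectrum built from $A_\beta$ (respectively $\eta_*A_\beta$) and the cofibre $L/K$; and that spectrum is $\Sigma_q$--free because, in the positive setting, smash powers of $L/K$ carry a free $\Sigma_q$--action. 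The rational splitting of the previous paragraph identifies the layerwise comparison maps as rational equivalences, the inductive hypothesis handles the $A_\beta$--part, and comparing the resulting cofibre sequences stage by stage and passing to the colimit shows $A_{\beta+1}\longrightarrow\eta^*\eta_*A_{\beta+1}$ is a rational equivalence, completing the induction. I expect the main obstacle to be precisely this last step: organising the two Harper--Hess towers so that they are visibly built from the same $\Sigma_q$--objects, and verifying that these objects are cofibrant enough for the orbit and homotopy--orbit functors to be homotopy invariant — equivalently, that $\eta_*$ of a cofibrant $\einf^1$--algebra has a cofibrant underlying $W$--spectrum. Once that bookkeeping is in place, the layerwise comparison is immediate from the rational splitting.
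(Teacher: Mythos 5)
Your argument is correct in outline, but it takes a genuinely different and much more laborious route than the paper does, and you have essentially re-derived from scratch the classical fact that $\einf$-algebras and $\Comm$-algebras agree in a nice positive model of spectra. The paper's proof is short: since left Bousfield localisation at $S_\bQ$ does not change the cofibrations (and hence does not change the cofibrant objects or the underived left adjoint $\eta_*$), and since $\eta^*$ is the identity on underlying spectra (so no fibrant replacement is needed to compute its right derived functor), the derived unit $A\to\eta^*\eta_*A$ on a cofibrant $\einf^1$-algebra $A$ is literally the same map whether one works rationally or integrally. Integrally it is a $\pi_*$-isomorphism by the standard comparison of $\einf$- and $\Comm$-algebras in $\spO$, and a $\pi_*$-isomorphism is in particular a rational equivalence, so one is done. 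You actually have all the ingredients for this shortcut — you observe that both model structures are right-lifted and that $\eta^*$ is homotopical — but you then launch into a cellular induction through the Harper--Hess tower instead of noticing that the cofibrations coincide with the integral ones. Two smaller remarks on your direct argument: (i) your ``rational splitting'' $\einf^1(q)_+\sm_{\Sigma_q}Z\to Z/\Sigma_q$ for positive cofibrant $Z$ with free $\Sigma_q$-action is actually an integral weak equivalence (this is the classical lemma that makes the positive model structure useful, cf.\ \cite[Lemma III.8.4]{mm02}); deducing it only rationally via semisimplicity of $\bQ[\Sigma_q]$ is overkill and slightly misplaces where rationality matters (it doesn't, for this lemma); (ii) your layerwise comparison of the two towers still needs the enveloping-operad bookkeeping — the $q$-th layer involves $\mcO_{A_\beta}(q)\sm_{\Sigma_q}(L/K)^{\sm q}$ versus $\Comm_{\eta_*A_\beta}(q)\sm_{\Sigma_q}(L/K)^{\sm q}$, and these are built from different base algebras, so the comparison is not as direct as ``the same $\Sigma_q$-object with two orbit functors applied'' — but the intended induction can be made to work. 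Finally, the paper handles non-trivial $W$ by noting $(\algin{\mcO}{\bC})[W]\cong\algin{\mcO}{\bC[W]}$ and that passing to $W$-objects preserves Quillen equivalences, rather than carrying the $W$-action through the whole induction.
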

\begin{proof}
We first prove the result in the case $W$ is the trivial group.
The model structure on the right exists by Theorem \ref{thm:commGsp}
and the model structure on the left exists by Corollary \ref{cor:einfmodelstructure}
(here $G$ is the trivial group).
Since both model structures are lifted and $\eta^*$ forgets to the identity
functor of $\spO_\bQ$, it follows that the right adjoint
is a right Quillen functor that preserves all weak equivalences.
The derived unit is the same as the derived unit of the non-rational
adjunction and hence is a $\pi_*$-isomorphism.

Now we consider non-trivial $W$.
Let $\bC$ be a symmetric monoidal category, $W$ a finite group
and $\mcO$ an operad such that the category of $\mcO$--algebras in $\bC$ exists.
It is easy to see that the category of $\mcO$-algebras in $\bC$ with $W$--action, $(\algin{\mcO}{\bC})[W]$
is isomorphic to the category of $\mcO$--algebras in $\mcC[W]$,  $\algin{\mcO}{\bC[W]}$.
Extending to categories of $W$-objects preserves Quillen equivalences by
\cite[Proposition 5.4]{KedziorekExceptional}, so the general result follows immediately.
\end{proof}

\begin{lemma}
For $W$ a finite group, there is a Quillen equivalence
\[
\adjunction{\mathbb{P} \circ |-|}
{\algin{\Comm}{   \spS_\bQ[W]    }}
{\algin{\Comm}{   \mathrm{Sp}_\bQ[W]   }}
{\mathrm{Sing}\circ \mathbb{U}}
\]
where both model structures are right induced from the rational positive stable model structures.
\end{lemma}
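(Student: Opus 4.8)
The plan is to obtain the displayed adjunction by lifting a strong symmetric monoidal Quillen equivalence of underlying categories to $\Comm$-algebras, using the machinery of Section \ref{sec:strategy}: Lemmas \ref{lem:adjunctionlift}, \ref{lem:liftQP} and \ref{lem:liftQE}. Both geometric realisation $|-|$ and the prolongation functor $\mathbb{P}\colon\spS\to\spO$ are strong symmetric monoidal left Quillen functors, and the composite $\mathbb{P}\circ|-|$, with right adjoint $\mathrm{Sing}\circ\mathbb{U}$, is a strong symmetric monoidal Quillen equivalence for the positive stable model structures by the standard comparisons of \cite{mmss01}, in the positive form of \cite{mm02}. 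Rationalising (localising at $S_\bQ$) is compatible with the adjunction, since $|-|$ and $\mathbb{P}$ carry the respective rational spheres to one another and the notions of rational equivalence correspond; and passing to $W$-objects preserves the Quillen equivalence by \cite[Proposition 5.4]{KedziorekExceptional}. Using the isomorphism $\algin{\Comm}{\bC[W]}\cong(\algin{\Comm}{\bC})[W]$, exactly as in the proof of Lemma \ref{lem:einftocomm}, it suffices to treat the trivial group and extend formally at the end.

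Next I would check that both categories of $\Comm$-algebras carry the required right-lifted model structures. On the orthogonal side this is Theorem \ref{thm:commGsp}. On the symmetric side one uses the convenient (flat) positive stable model structure on commutative symmetric ring spectra of \cite{Shipley_convenient}, and then rationalises by the same monoidal-localisation argument as in the appendix: since $S_\bQ$ is a commutative monoid, rationalisation is a smashing localisation and descends to $\Comm$-algebras. Given both model structures, Lemma \ref{lem:adjunctionlift} lifts the adjunction to $\Comm$-algebras and Lemma \ref{lem:liftQP} shows the lift is a Quillen pair.

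It then remains, for Lemma \ref{lem:liftQE}, to check that the forgetful functor $U\colon\algin{\Comm}{\spS_\bQ[W]}\to\spS_\bQ[W]$ preserves cofibrant objects. This is precisely the point of the convenient/flat model structure — a cofibrant commutative symmetric ring spectrum is flat, hence cofibrant — and it is inherited by the rational localisation and by the passage to $W$-objects. I expect this cofibrancy condition to be the only delicate point: it is what forces one to use the flat rather than the projective positive stable model structure on the symmetric side, and one must take some care that the underlying Quillen equivalence is still available for the flat structure. Granting it, Lemma \ref{lem:liftQE} gives the result. (Alternatively one can bypass Lemma \ref{lem:liftQE} entirely: $\mathrm{Sing}\circ\mathbb{U}$ preserves all weak equivalences, and the derived unit on a cofibrant $\Comm$-algebra coincides with the underlying derived unit, so the argument of Lemma \ref{lem:einftocomm} applies verbatim.)
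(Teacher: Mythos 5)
Your proposal takes the same route as the paper: the paper's own proof is a one-liner invoking Lemma~\ref{lem:liftQE}, which is exactly what you do after setting up Lemma~\ref{lem:adjunctionlift} and checking the existence of the lifted model structures. The value you add is in spelling out the key hypothesis of Lemma~\ref{lem:liftQE}, namely that $U\colon\algin{\Comm}{\spS_\bQ[W]}\to\spS_\bQ[W]$ sends cofibrant objects to cofibrant objects; you are right that this is the delicate point and that it is sensitive to which model structure one puts on the symmetric side. Cofibrant commutative ring objects in symmetric spectra with the positive \emph{projective} stable structure are only flat, not projectively cofibrant, so one does need to read ``positive stable'' on the symmetric side as the positive \emph{flat} (convenient) structure of \cite{Shipley_convenient} for the hypothesis to hold --- and then one must also verify (as you flag but do not carry out) that $\mathbb{P}\circ|-|\dashv\mathrm{Sing}\circ\mathbb{U}$ remains a Quillen equivalence with the flat structure on the source. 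The paper elides this entirely, and your fallback argument --- that $\mathrm{Sing}\circ\mathbb{U}$ creates weak equivalences so one can check the derived unit directly on underlying spectra after a cofibrant replacement there, in the style of Lemma~\ref{lem:einftocomm} --- is a sound way to sidestep the cofibrancy-preservation hypothesis if one prefers not to change the model structure, though it is not quite ``verbatim'' since here the two categories are genuinely different rather than the same underlying category with two structures. Your reduction to trivial $W$ is harmless but unnecessary, since Lemma~\ref{lem:liftQE} applies directly to the $W$-object categories.
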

\begin{proof}
This adjunction satisfies the conditions of Lemma \ref{lem:liftQE}, hence we can lift the
equivalence between
$Sp_\bQ^\Sigma[W] $ and $\mathrm{Sp}_\bQ[W]$ to the level of $\Comm$--algebras.
\end{proof}

\begin{proposition}
For $W$ a finite group, there is a Quillen equivalence
\[
\adjunction{H\bQ\wedge -}
{\algin{\Comm}{   \spS_\bQ[W]   }}
{\algin{\Comm}{   (H\bQ \leftmod)[W]  }}.
{U}
\]
\end{proposition}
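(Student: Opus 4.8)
The plan is to deduce this from Lemma \ref{lem:liftQE}, exactly as the preceding lemma was. First I would reduce to the case $W=1$, arguing as in the proof of Lemma \ref{lem:einftocomm}: for any symmetric monoidal $\bC$ the category $(\algin{\Comm}{\bC})[W]$ is isomorphic to $\algin{\Comm}{\bC[W]}$, and by \cite[Proposition 5.4]{KedziorekExceptional} passage to categories of $W$-objects carries a Quillen equivalence to a Quillen equivalence, so it is enough to treat the trivial group.

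For $W=1$ I would apply Lemma \ref{lem:liftQE} with $\bC=\spS_\bQ$, $\bD=(H\bQ\leftmod)$, $L=H\bQ\sm-$, $R=U$ and $\mcO=\Comm$, working throughout with the \emph{positive flat stable} (``convenient'') model structures of \cite{Shipley_convenient}; these are precisely the structures from which the categories of commutative monoids are right-lifted in \cite{richtershipley}, so the model structures on $\algin{\Comm}{\spS_\bQ}$ and $\algin{\Comm}{H\bQ\leftmod}$ exist and the hypotheses of Lemma \ref{lem:liftedmodel} are available on both sides. The remaining formal inputs are quick. The functor $H\bQ\sm-$ is an enriched strong symmetric monoidal functor, since for a commutative ring spectrum $R$ one has $(R\sm X)\sm_R(R\sm Y)\cong R\sm(X\sm Y)$ and $R\sm S\cong R$; hence Lemma \ref{lem:adjunctionlift} lifts the adjunction to $\Comm$-algebras and Lemma \ref{lem:liftQP} makes the lift a Quillen pair. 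And $(H\bQ\sm-,U)$ is a Quillen equivalence on underlying categories by the Remark above (rationally $S_\bQ\to H\bQ$ is a $\pi_*$-isomorphism of commutative ring spectra, so extension of scalars along it is a Quillen equivalence).

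The one genuinely non-formal hypothesis of Lemma \ref{lem:liftQE}, and the step I expect to be the main obstacle, is that the forgetful functor $U\co\algin{\Comm}{\spS_\bQ}\to\spS_\bQ$ preserves cofibrant objects. This is exactly what forces the use of the flat rather than the projective positive stable model structure: in the flat setting the symmetric powers $X^{\sm n}/\Sigma_n$ of a flat-cofibrant spectrum are again flat-cofibrant, so the standard pushout filtration for free commutative monoids (in the manner of Harper--Hess \cite[Proposition 5.10]{harperhess13}) exhibits a cofibrant commutative monoid as a transfinite composite of flat-cofibrant spectra, hence flat-cofibrant. This is precisely the ``convenience'' established in \cite{Shipley_convenient}, and it is cleaner still rationally since finite-group homotopy orbits agree with strict orbits with $\bQ$-coefficients; the same argument relative to $H\bQ$ gives the analogous statement for $U\co\algin{\Comm}{H\bQ\leftmod}\to H\bQ\leftmod$, as in the Richter--Shipley framework. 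Granting this, Lemma \ref{lem:liftQE} yields the Quillen equivalence for $W=1$, and the first step promotes it to all finite $W$. The point to be careful about throughout is bookkeeping: that the flat (convenient) model structures, and not the projective ones, are used consistently on both sides, and that rationalisation is compatible with flatness.
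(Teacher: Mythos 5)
Your proposal is correct and follows essentially the same route as the paper: both reduce the claim to Lemma~\ref{lem:liftQE}, with the only non-formal hypothesis being that the forgetful functor preserves cofibrant $\Comm$-algebras (the ``convenience'' property). The paper is terser --- it simply cites \cite[Theorem~7.16]{harperhess13} for the existence of the right-lifted model structure on $\algin{\Comm}{(H\bQ\leftmod)[W]}$ and then invokes Lemmas~\ref{lem:adjunctionlift} and~\ref{lem:liftQE} --- whereas you unpack the $W$-object reduction (as in Lemma~\ref{lem:einftocomm}) and the role of the positive flat model structures of \cite{Shipley_convenient} explicitly. One small bookkeeping caveat: the Remark preceding these results in the paper takes the positive stable structure on $\spS_\bQ[W]$ and the positive \emph{flat} structure only on $(H\bQ\leftmod)[W]$, whereas you put the flat structure on both sides to secure cofibrant-preservation; your choice is the safer and cleaner one, and it matches the hypotheses under which the cited Harper--Hess and Richter--Shipley results actually operate.
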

\begin{proof}
The right hand model structure exists by \cite[Theorem 7.16]{harperhess13}.
This adjunction exists by Lemma \ref{lem:adjunctionlift}.
It satisfies the conditions of Lemma \ref{lem:liftQE}, hence we can lift the Quillen equivalence between
$\spS_\bQ[W]$ and $(H\bQ\leftmod)[W]$ to the level of $\Comm$--algebras.
\end{proof}

The paper \cite{richtershipley} of Richter and Shipley proves that
the model category of commutative $H \bR$-algebras (in symmetric spectra)
is Quillen equivalent to $\einf$-algebras in unbounded chain complexes of $R$-modules.
When $R=\bQ$ this gives the following

\begin{theorem}[{\cite[Corollary 8.4]{richtershipley}}]
There is a zig-zag of Quillen equivalences between the model category of commutative
$H \bQ$--algebras (in symmetric spectra) and differential graded commutative $\bQ$--algebras.
\end{theorem}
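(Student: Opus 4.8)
The plan is to obtain this statement as the $R=\bQ$ specialisation of the general Richter--Shipley result quoted above. First I would set $R=\bQ$ in \cite{richtershipley}: this produces a zig-zag of Quillen equivalences between commutative $H\bQ$--algebras in symmetric spectra and $\mcE$--algebras in unbounded chain complexes of $\bQ$--modules, for some cofibrant $\einf$--operad $\mcE$ in $\Ch(\bQ)$.

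It then remains to pass from $\mcE$--algebras in $\Ch(\bQ)$ to strictly commutative differential graded $\bQ$--algebras. Since $\bQ$ has characteristic zero, the canonical map $\eta \co \mcE \lra \Comm$ of operads in $\Ch(\bQ)$ is a weak equivalence, and both operads are admissible: the category of $\Comm$--algebras in $\Ch(\bQ)$ carries the model structure right-lifted along the free commutative algebra functor from the projective model structure on $\Ch(\bQ)$, because over $\bQ$ every chain complex is cofibrant and the transfinite compositions of pushouts of free maps appearing in Lemma \ref{lem:liftedmodel} are quasi-isomorphisms. A standard rectification argument then shows that the induced adjunction
\[
\adjunction{\eta_*}{\algin{\mcE}{\Ch(\bQ)}}{\algin{\Comm}{\Ch(\bQ)}}{\eta^*}
\]
is a Quillen equivalence; this is, in another guise, the classical fact that every $\einf$--algebra over a field of characteristic zero can be rectified to a commutative differential graded algebra. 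Concatenating this Quillen equivalence with the zig-zag of the first step yields the asserted chain.

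The step I expect to require the most care is this last rectification comparison: one must know that the $\einf$--operad supplied by \cite{richtershipley} (or a suitable cofibrant replacement) is $\Sigma$--cofibrant, so that $(\eta_*,\eta^*)$ is genuinely a Quillen equivalence and not merely a Quillen adjunction, and one must check that the model structure on commutative $\bQ$--dgas obtained this way is the same projectively right-lifted one that appears at the foot of the algebraic-model diagram in the Introduction. Over a field of characteristic zero both points are routine; indeed Richter and Shipley carry out exactly this specialisation, so the theorem is immediate from \cite[Corollary 8.4]{richtershipley}.
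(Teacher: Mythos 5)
Your proposal is correct and takes essentially the same approach as the paper, which simply quotes Corollary 8.4 of Richter--Shipley as the $R=\bQ$ specialisation of their main result; the paper does not supply its own argument beyond noting this. Your sketch of the rectification step from $\einf$--algebras in $\Ch(\bQ)$ to strictly commutative dgas (using that the map to $\Comm$ is an equivalence of admissible, $\Sigma$--cofibrant operads in characteristic zero) is precisely the argument that underlies the passage from Richter--Shipley's main theorem to their Corollary 8.4, and your check that the resulting model structure on commutative $\bQ$--dgas is the projectively right-lifted one (surjective fibrations, quasi-isomorphism weak equivalences) matches what the paper records immediately after the theorem statement.
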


The model structure on commutative $H \bQ$--algebras is lifted from the positive
stable model structure on symmetric spectra, see for example \cite{Shipley_convenient} or \cite{hss00}. The model structure on
differential graded commutative $\bQ$--algebras has fibrations the surjections
and weak equivalences the homology isomorphisms.

\begin{corollary}\label{cor:operadshipleyfy}
Let $W$ be a finite group.
There is a zig-zag of Quillen equivalences between the model category of commutative
$H \bQ$--algebras with $W$--action
and differential graded commutative $\bQ$--algebras with a $W$--action.
\end{corollary}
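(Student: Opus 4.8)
The plan is to deduce this from the preceding theorem (the $R=\bQ$ case of \cite[Corollary 8.4]{richtershipley}) by applying the functor $(-)[W]$ of passing to objects with $W$--action, using that this operation preserves Quillen equivalences by \cite[Proposition 5.4]{KedziorekExceptional}.

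First I would record, exactly as in the proof of Lemma \ref{lem:einftocomm}, the elementary categorical identifications. For a symmetric monoidal category $\bC$ and an operad $\mcO$ for which the category of $\mcO$--algebras exists, there is an isomorphism of categories $(\algin{\mcO}{\bC})[W] \cong \algin{\mcO}{\bC[W]}$. Applied with $\mcO = \Comm$ this identifies the category of commutative $H\bQ$--algebras with $W$--action with $\algin{\Comm}{(H\bQ\leftmod)[W]}$, and the category of differential graded commutative $\bQ$--algebras with $W$--action with $\algin{\Comm}{\Ch(\bQ)[W]} = \algin{\Comm}{\Ch(\bQ[W])}$; the same remark applies to the operads $\mcO = \einf$ (in spectra or in chain complexes) governing the various intermediate categories occurring in the Richter--Shipley zig-zag. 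One should also check that under these identifications the right-lifted model structure on the category of $W$--objects agrees with the model structure obtained by lifting directly from the positive stable model structure on (non-equivariant) spectra, resp. from the model structure on $\Ch(\bQ)$; this is immediate, since in each case the weak equivalences and fibrations are created in the underlying category of spectra or chain complexes, and these detection properties are unaffected by formally adjoining a $W$--action.

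Next I would run \cite[Proposition 5.4]{KedziorekExceptional} along the zig-zag. The preceding theorem provides a finite chain of Quillen equivalences
\[
\algin{\Comm}{(H\bQ\leftmod)} \ \simeq \ \cdots \ \simeq \ \algin{\Comm}{\Ch(\bQ)},
\]
each of whose terms is a category of algebras (over $\Comm$ or over an $\einf$--operad) carrying a cofibrantly generated model structure right-lifted along a forgetful functor to spectra or chain complexes, via \cite{harperhess13}. By \cite[Proposition 5.4]{KedziorekExceptional} each such category then admits a right-lifted cofibrantly generated model structure on its category of $W$--objects, and each Quillen equivalence in the chain extends to a Quillen equivalence on $W$--objects. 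Composing the resulting chain and invoking the identifications of the previous step yields the desired zig-zag between commutative $H\bQ$--algebras with $W$--action and differential graded commutative $\bQ$--algebras with $W$--action.

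The only point requiring genuine care — and hence the main obstacle — is verifying that \cite[Proposition 5.4]{KedziorekExceptional} applies to every category occurring inside the Richter--Shipley zig-zag, i.e. that each of those categories of algebras has a cofibrantly generated model structure whose generating (acyclic) cofibrations are handled correctly when one adjoins the $W$--action (so that the lifted model structure on $W$--objects exists and the adjunctions remain Quillen). Since all the model structures involved are right-lifted along forgetful functors from spectra or chain complexes, this is a matter of bookkeeping with the results of \cite{harperhess13} rather than a new idea.
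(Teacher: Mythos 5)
Your proposal is correct and matches the paper's intended approach: the paper states this corollary without proof, but the mechanism you invoke --- the isomorphism $(\algin{\mcO}{\bC})[W] \cong \algin{\mcO}{\bC[W]}$ together with the fact that passing to $W$--objects preserves Quillen equivalences by \cite[Proposition 5.4]{KedziorekExceptional} --- is precisely the one the paper uses a few lines earlier in the proof of Lemma~\ref{lem:einftocomm}, and is clearly what is meant here. Your flagged concern about checking cofibrant generation termwise along the Richter--Shipley zig-zag is the right thing to be careful about, and, as you say, it is bookkeeping rather than a gap.
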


\begin{theorem}\label{thm:splitcompare}
There is a zig-zag of Quillen equivalences 
\[
\algin{\einf^1}{  L_{e_{(H)_G}S_{\bQ}}(\GSp)   }
\  \simeq \ 
\algin{\Comm}{\Ch(\bQ[W_G H])}.
\]
\end{theorem}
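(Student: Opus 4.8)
The plan is to assemble Theorem~\ref{thm:splitcompare} by concatenating the zig-zags of Quillen equivalences that have been established one step at a time in this section. Reading the right-hand column of the diagram from the introduction, the chain has the shape
\[
\algin{\einf^1}{  L_{e_{(H)_G}S_{\bQ}}(\GSp)   }
\ \simeq\
\algin{\einf^1}{   \spO_\bQ[W_G H]   }
\ \simeq\
\algin{\Comm}{   \spO_\bQ[W_G H]   }
\ \simeq\
\algin{\Comm}{   \spS_\bQ[W_G H]   }
\ \simeq\
\algin{\Comm}{   (H\bQ \leftmod)[W_G H]  }
\ \simeq\
\algin{\Comm}{   \Ch(\bQ[W_G H])   },
\]
and the point is simply that every link has already been supplied. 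First I would invoke the chain of four Quillen equivalences displayed just before Lemma~\ref{lem:einftocomm}, produced by combining the monoidal Quillen equivalences of \cite[Corollary 5.10]{KedziorekExceptional} (in their positive-stable incarnation, per the Remark) with the lifting machinery of Lemmas~\ref{lem:adjunctionlift} and~\ref{lem:liftQE}; this requires knowing the lifted model structures exist, which is exactly the content of Theorem~\ref{thm:algebramodelstructure} together with Example~\ref{ex:compatible} (every $G$-spectrum is $\einf^1$-compatible), and that the forgetful functor $U$ preserves cofibrant objects, which is the last clause of Theorem~\ref{thm:algebramodelstructure}. This takes us from $\algin{\einf^1}{L_{e_{(H)_G}S_\bQ}(\GSp)}$ down to $\algin{\einf^1}{\spO_\bQ[W_GH]}$.

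Next I would change operads using Lemma~\ref{lem:einftocomm} to pass from $\algin{\einf^1}{\spO_\bQ[W_GH]}$ to $\algin{\Comm}{\spO_\bQ[W_GH]}$, and then apply the two subsequent results of this section: the Quillen equivalence $\algin{\Comm}{\spS_\bQ[W]}\simeq\algin{\Comm}{\spO_\bQ[W]}$ (via $\mathbb{P}\circ|-|$ and $\mathrm{Sing}\circ\mathbb{U}$) and the Quillen equivalence $\algin{\Comm}{\spS_\bQ[W]}\simeq\algin{\Comm}{(H\bQ\leftmod)[W]}$ (via $H\bQ\wedge-$ and $U$). Finally, Corollary~\ref{cor:operadshipleyfy} provides the zig-zag of Quillen equivalences from $\algin{\Comm}{(H\bQ\leftmod)[W_GH]}$ to differential graded commutative $\bQ$-algebras with $W_GH$-action, i.e.\ to $\algin{\Comm}{\Ch(\bQ[W_GH])}$. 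Composing all of these zig-zags gives the asserted zig-zag between $\algin{\einf^1}{L_{e_{(H)_G}S_\bQ}(\GSp)}$ and $\algin{\Comm}{\Ch(\bQ[W_GH])}$.

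The one genuine bookkeeping point — and the place where care is needed rather than a real obstacle — is checking that the model structures match up at each splice. Specifically, in moving from the $\einf^1$ side to the $\Comm$ side one must make sure the rational positive stable model structure on $\spO_\bQ[W]$ (and then the positive flat stable structure on $(H\bQ\leftmod)[W]$ demanded by \cite{richtershipley}) is compatible with the model structure appearing at the top of Corollary~\ref{cor:operadshipleyfy}; this is precisely what the Remark at the start of the section arranges, observing that the whole comparison of \cite{KedziorekExceptional} can be run with positive stable model structures and that $H\bQ\wedge-\dashv U$ remains a Quillen equivalence onto the positive flat stable structure. Apart from this alignment of model structures, the proof is a concatenation: I expect the statement to follow in a couple of lines once the reader is pointed at the preceding lemmas and corollaries.

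\begin{proof}
This is the concatenation of the Quillen equivalences established in this section.
First, the displayed chain of Quillen equivalences preceding Lemma~\ref{lem:einftocomm}
(obtained by applying Lemmas~\ref{lem:adjunctionlift} and~\ref{lem:liftQE} to the
positive-stable incarnation of the monoidal Quillen equivalences of
\cite[Corollary 5.10]{KedziorekExceptional}, using Theorem~\ref{thm:algebramodelstructure}
and Example~\ref{ex:compatible} for the existence of the lifted model structures and the
preservation of cofibrant objects by the forgetful functor) gives a zig-zag
\[
\algin{\einf^1}{  L_{e_{(H)_G}S_{\bQ}}(\GSp)   }
\ \simeq\
\algin{\einf^1}{   \spO_\bQ[W_G H]   }.
\]
Lemma~\ref{lem:einftocomm} gives a Quillen equivalence
$\algin{\einf^1}{ \spO_\bQ[W_G H] } \simeq \algin{\Comm}{ \spO_\bQ[W_G H] }$.
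The two subsequent results of this section give Quillen equivalences
\[
\algin{\Comm}{ \spO_\bQ[W_G H] }
\ \simeq\
\algin{\Comm}{ \spS_\bQ[W_G H] }
\ \simeq\
\algin{\Comm}{ (H\bQ\leftmod)[W_G H] },
\]
the latter landing in the positive flat stable model structure required as the input of
\cite{richtershipley}. Finally Corollary~\ref{cor:operadshipleyfy} supplies a zig-zag of
Quillen equivalences
\[
\algin{\Comm}{ (H\bQ\leftmod)[W_G H] }
\ \simeq\
\algin{\Comm}{ \Ch(\bQ[W_G H]) }.
\]
Composing these zig-zags yields the claim.
\end{proof}
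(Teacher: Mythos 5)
Your proposal is correct and follows exactly the same route as the paper: concatenate the four lifted Quillen equivalences preceding Lemma~\ref{lem:einftocomm}, change operads via Lemma~\ref{lem:einftocomm}, apply the two subsequent comparisons down to $\algin{\Comm}{(H\bQ\leftmod)[W_GH]}$, and finish with Corollary~\ref{cor:operadshipleyfy}. The paper's proof is simply a terser version of the same concatenation (it says ``the above results give a zig-zag'' without naming each link), so your write-up adds explicitness but no new content.
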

\begin{proof}
The above results give a zig-zag of Quillen equivalences 
\[
\algin{\einf^1}{  L_{e_{(H)_G}S_{\bQ}}(\GSp)   }
\  \simeq \ 
\algin{\Comm}{   (H\bQ\leftmod)[W_G H]  }
\]
Corollary \ref{cor:operadshipleyfy} shows that
$\algin{\Comm}{   (H\bQ\leftmod)[W_G H]  }$ is Quillen equivalent to the model category
$\algin{\Comm}{\Ch(\bQ[W_G H])}$ so the result holds.
\end{proof}

We collect all these Quillen equivalences into one result.
\begin{theorem}\label{thn:fullcompare}
There is a zig-zag of Quillen equivalences
\[
\algin{\einf^1}{  (\GSp)_\bQ   }
\ \simeq \ 
\algin{\Comm} {\prod_{(H) \leqslant G} \Ch(\bQ[W_G H])}.
\]
\end{theorem}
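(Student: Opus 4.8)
The plan is to assemble Theorem \ref{thn:fullcompare} from Theorem \ref{thm:splitcompare} by combining the conjugacy-class-by-conjugacy-class equivalences into a single zig-zag over the product. First I would recall the idempotent splitting: the rational Burnside ring of $G$ has a complete set of orthogonal idempotents $\{e_{(H)_G}\}$ indexed by conjugacy classes of subgroups $H \leqslant G$, and localising $(\GSp)_\bQ$ at the corresponding $G$-spectra gives a splitting
\[
(\GSp)_\bQ \ \simeq \ \prod_{(H) \leqslant G} L_{e_{(H)_G}S_\bQ}(\GSp)
\]
as a symmetric monoidal Quillen equivalence; this is the monoidal algebraic model of \cite{KedziorekExceptional} (its first step, \cite[Corollary 5.10]{KedziorekExceptional}). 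Since this is a strong symmetric monoidal Quillen equivalence of $\bV$-tensored categories and $\einf^1$ is an operad in (pointed) spaces, Lemma \ref{lem:adjunctionlift} lifts it to an adjunction on $\einf^1$-algebras, Corollary \ref{cor:einfmodelstructure} supplies the right-lifted model structures on both sides (the product of model categories being itself right-lifted in the evident way), and Lemma \ref{lem:liftQE} upgrades it to a Quillen equivalence
\[
\algin{\einf^1}{(\GSp)_\bQ} \ \simeq \ \algin{\einf^1}{\textstyle\prod_{(H)\leqslant G} L_{e_{(H)_G}S_\bQ}(\GSp)},
\]
where we use that the forgetful functor preserves cofibrant objects (Corollary \ref{cor:einfmodelstructure}).

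Next I would observe that, for any operad $\mcO$ whose algebra categories exist, $\mcO$-algebras in a finite product of symmetric monoidal categories is the product of the categories of $\mcO$-algebras, and that this identification is compatible with the right-lifted model structures. Hence
\[
\algin{\einf^1}{\textstyle\prod_{(H)\leqslant G} L_{e_{(H)_G}S_\bQ}(\GSp)} \ \cong \ \prod_{(H)\leqslant G} \algin{\einf^1}{L_{e_{(H)_G}S_\bQ}(\GSp)}.
\]
Then Theorem \ref{thm:splitcompare} gives, for each conjugacy class $(H)$, a zig-zag of Quillen equivalences from $\algin{\einf^1}{L_{e_{(H)_G}S_\bQ}(\GSp)}$ to $\algin{\Comm}{\Ch(\bQ[W_GH])}$. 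Taking the product of these zig-zags over $(H)$ — a product of Quillen equivalences is a Quillen equivalence, and each zig-zag has the same finite shape (or one pads the shorter ones with identities) — yields
\[
\prod_{(H)\leqslant G} \algin{\einf^1}{L_{e_{(H)_G}S_\bQ}(\GSp)} \ \simeq \ \prod_{(H)\leqslant G} \algin{\Comm}{\Ch(\bQ[W_GH])},
\]
and applying once more the identification of $\Comm$-algebras in a product with the product of $\Comm$-algebra categories rewrites the right-hand side as $\algin{\Comm}{\prod_{(H)\leqslant G}\Ch(\bQ[W_GH])}$. Concatenating the three displayed zig-zags proves the theorem.

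The step I expect to be the genuine obstacle — and the one needing care rather than routine bookkeeping — is the very first one: checking that the idempotent splitting Quillen equivalence satisfies the hypotheses of Lemmas \ref{lem:adjunctionlift} and \ref{lem:liftQE} \emph{at the level of $\einf^1$-algebras}. Concretely one must verify that the splitting equivalence is $\bV$-enriched and strong symmetric monoidal (this is part of the monoidal structure theory of \cite{KedziorekExceptional}, but it must be invoked precisely), that the model structure $\algin{\einf^1}{(\GSp)_\bQ}$ on the left is exactly the $S_\bQ$-local $\einf^1$-algebra model structure produced by Corollary \ref{cor:einfmodelstructure} (using that all $G$-spectra are $\einf^1$-compatible, Example \ref{ex:compatible}), and that the forgetful-functor-preserves-cofibrants condition of Lemma \ref{lem:liftQE} holds for the product model structure — all of which are available from Section \ref{section:liftingModels} but must be marshalled together. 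By contrast, the "algebras in a product = product of algebras" identifications and the product-of-Quillen-equivalences-is-a-Quillen-equivalence step are formal, and the per-conjugacy-class content is entirely subsumed by Theorem \ref{thm:splitcompare}.
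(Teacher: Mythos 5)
Your proposal matches the paper's proof: both split $(\GSp)_\bQ$ via the idempotent adjunction, lift it to $\einf^1$-algebras by Lemma \ref{lem:liftQE}, commute algebra-formation past the product, and apply Theorem \ref{thm:splitcompare} factor-by-factor. The one small discrepancy is the citation for the splitting step — the paper attributes the symmetric monoidal Quillen equivalence $\Delta \colon (\GSp)_\bQ \rightleftarrows \prod_{(H)} L_{e_{(H)_G}S_\bQ}(\GSp)$ to \cite[Corollary 6.4]{barnesfinite} rather than \cite[Corollary 5.10]{KedziorekExceptional}, but this does not affect the substance of the argument.
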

\begin{proof}
It is clear that
\[
\algin{\Comm} {\prod_{(H) \leqslant G} \Ch(\bQ[W_G H])}
=
\prod_{(H) \leqslant G} \algin{\Comm} {\Ch(\bQ[W_G H])}.
\]
Whereas on the topological side,
the symmetric monoidal Quillen equivalence
\[
\adjunction
{\Delta}
{(\GSp)_\bQ}
{\prod_{(H) \leqslant G}  L_{e_{(H)_G}S_{\bQ}}(\GSp)  }
{\prod}
\]
of \cite[Corollary 6.4]{barnesfinite}
lifts to the level of $\einf^1$--algebras by Lemma \ref{lem:liftQE}.
Since taking $\einf^1$--algebras commutes with the product of categories, we may then use
Theorem \ref{thm:splitcompare} on each factor to obtain the
desired zig-zag of Quillen equivalences.
\end{proof}

\begin{remark}\label{rem:different_cats}
Each of the following categories has a model structure lifted from
the corresponding category of spectra.
\begin{center}
\begin{tabular}{p{5cm}p{5cm}}
$\algin{\einf^1}{   L_{e_1S_\bQ}(W\endash \spO)    }$
&
$\algin{\Comm}{   L_{e_1S_\bQ}(W \endash \spO)    }$ \\
${\algin{\einf^1}{  \spO_\bQ^\Sigma[W]    }}$
&
${\algin{\Comm}{   \spO_\bQ^\Sigma[W]    }}$ \\
$\algin{\einf^1}{   \spO_\bQ[W]   }$
&
$\algin{\Comm}{   \spO_\bQ[W]   } $\\
$\algin{\einf^1}{   (H\bQ \leftmod)[W]  }$
&
$\algin{\Comm}{   (H\bQ\leftmod)[W]  }$ \\
\end{tabular}
\end{center}

Since each of the model structures are lifted,
the model categories of the left hand column are all Quillen equivalent and
similarly so for the right hand column.
Lemma \ref{lem:einftocomm} gives a Quillen equivalence between the two columns
(at the level of $\spO_\bQ[W]$), hence all of the above model categories are Quillen equivalent.
In particular, we have several routes to obtain the Quillen equivalence of
Theorem \ref{thn:fullcompare}, but they all give the same derived equivalence.

We cannot, however, put a lifted model structure on the two model categories
\begin{center}
\begin{tabular}{p{5cm}p{5cm}}
$\algin{\Comm}{L_{e_{(H)_G}S_{\bQ}}(\GSp)}$ 
&
$\algin{\Comm}{L_{e_{(H)_N}S_{\bQ}}(\NSp)}$ \\
\end{tabular}
\end{center}
The difficulty is on the existence of multiplicative norms, 
as we discuss in Section \ref{sec:obstructions}. 
\end{remark}

\section{Norm type obstructions}\label{sec:obstructions}

The first step in the zig-zag of Quillen equivalences used to provide an algebraic model for rational $G$-spectra, when $G$ is a finite group is the splitting result of Barnes \cite{barnessplitting}. It uses the  idempotents of the rational Burnside ring corresponding to conjugacy classes of all subgroups of $G$. This splitting is precisely the step which fails to preserve any more ($G$-commutative) structure on algebras than the one for $E_\infty^1$-algebras. This is closely related to the fact that at the homotopy level any algebra with more structure will have non-trivial norm maps and the complete idempotent splitting in the first step of the zig-zag kills any non-trivial norm maps. This shows, that a Quillen equivalence of $E_\infty^1$-algebras and commutative algebras in the algebraic model is the most one could hope for using this approach. Let us analyse the situation in more detail below.

First recall that if $X$ is a $G$-spectrum 
which is non-equivariantly contractible, such that 
 $\pi_0^H(X)$ is a unital ring for all $H \leqslant G$ 
and $X$ has norms then $X\simeq *$.
By having norms we mean that there is a map $N_1^H:
\pi_0^1(X)\lra \pi_0^H(X)$ which is compatible with the product in the
sense that $N_1^H(0)=0$, $N_1^H(1)=1$ and
$N_1^H(xy)=N_1^H(x)N_1^H(y)$. Now if  $\pi_0^1(X)=0$ then $1=0$, and  it follows that $1=0$ in $\pi_0^H(X)$ so that $\pi^H_0(X)=0$. Hence $\pi^H_*(X)=0$ for all $H$ and $X\simeq *$.

\begin{remark}The argument above implies that there is no right-lifted model structure on $\einf^\cF$-algebras from $L_{e_{(H)_G}S_{\bQ}}(G\endash \mathrm{Sp})$, when $H\neq 1$ and $\cF\neq 1$. Suppose there is one. Then for any $\einf^\cF$-algebra $A$ there is a fibrant replacement (in $\einf^\cF$-algebras) $A^f$, which is underlying $e_{(H)_G}$--local and thus by the argument above it is equivariantly contractible. Thus any $A$ is weakly equivalent to a point in the model structure lifted from the $e_{(H)_G}$--local model structure on $G$--spectra. However, not every  $\einf^\cF$-algebra in $G$--spectra is $e_{(H)_G}$--locally equivalent to a point (for example the sphere spectrum). Since the forgetful functor was supposed to create weak equivalences at the level of  $\einf^\cF$-algebras, we get a contradiction. A similar argument shows that there is
no right-lifted model structure on $\Comm$-algebras from 
$L_{e_{(H)_G}S_{\bQ}}(G\endash \mathrm{Sp})$.
\end{remark}

\appendix
\section{Rationalisation of commutative rings}\label{appendix:rational}

In this section we show that there is a right-lifted model structure on commutative algebras in rational $G$--spectra, where $G$ is a compact Lie group. In particular, this argument shows that there is a right-lifted model structure on the category of commutative algebras in rational \emph{non--equivariant} spectra, used in several lemmas in Section \ref{sec:liftingQE}. This seems to be a folklore result, however the authors do not know of a reference for it in the literature.

Fix $E=S_\bQ$, the non-equivariant rational sphere spectrum
that we inflate to the level of $G$-spectra whenever needed.
The spectrum $S_\bQ$ is constructed as the homotopy colimit of the sequence
\[
S^0 \overset{2}{\lra}
S^0 \overset{3}{\lra}
S^0 \overset{4}{\lra}
S^0 \overset{5}{\lra} \dots
\]
in $\ho (\spO)$, hence it is a cofibrant spectrum (and can be made positive
cofibrant by replacing each $S^0$ by its positive cofibrant replacement before taking homotopy colimits).
It is weakly equivalent to the non-equivariant
Eilenberg--MacLane spectrum $H \bQ$, which is a commutative ring spectrum.

We want to show that free algebra functor for $\Comm$ takes rational equivalences
(that are cofibrations between cofibrant objects in the rational positive stable model structure) to rational equivalences.
We have to use a different approach to the one 
in Section \ref{sec:isotropic},
as $\Comm$ uses the family
of all subgroups of $\GSn$ and is not an $\Ninfty$--operad. 
The key fact that we will use is that
$S_\bQ$ has a $\Sn$-equivariant `multiplication' map
$S_\bQ^{\smashprod_n} \lra S_\bQ$ that is a
weak equivalence of $\Sn$-spectra. This map is
modelling the multiplication of $H \bQ$.

\begin{lemma}\label{lem:mult}
There is a $\Sn$-equivariant
map $S_\bQ^{\smashprod_n} \lra S_\bQ$ of $\Sn$-spectra
that is a $\pi_*$-isomorphism on $\Sn$-spectra (indexed on a trivial universe).

Furthermore, if we inflate this to $\GSnSp$ then it
becomes a weak equivalence of  $\GSnSp$.
\end{lemma}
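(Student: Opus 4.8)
The plan is to build the map $S_\bQ^{\smashprod_n} \lra S_\bQ$ by exploiting the explicit presentation of $S_\bQ$ as a homotopy colimit of the telescope $S^0 \overset{2}{\lra} S^0 \overset{3}{\lra} \cdots$ and the fact that $S_\bQ \simeq H\bQ$ is an $E_\infty$ (hence in particular commutative up to all coherences) ring spectrum. First I would recall that for any commutative symmetric ring spectrum $R$ the iterated multiplication $R^{\smashprod_n} \to R$ is automatically $\Sn$-equivariant, where $\Sn$ acts by permuting the smash factors on the source and trivially on the target: this is just the statement that the $n$-fold product is symmetric. So it suffices to produce a model of $S_\bQ$ that is a genuinely commutative symmetric ring spectrum and whose underlying spectrum is $\pi_*$-equivalent to the telescope description; concretely, take a cofibrant commutative ring model $H\bQ^c$ of $H\bQ$ in symmetric spectra (which exists by the appendix's own discussion, or by \cite{Shipley_convenient}), and observe that the comparison $S_\bQ \simeq H\bQ^c$ is a non-equivariant $\pi_*$-isomorphism. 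The multiplication $\mu\co (H\bQ^c)^{\smashprod_n} \to H\bQ^c$ is then the desired map.

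The next step is to check that $\mu$ is a $\pi_*$-isomorphism of $\Sn$-spectra indexed on the trivial universe. Here ``$\pi_*$-isomorphism of $\Sn$-spectra'' on the trivial universe means: an isomorphism on $\pi_*^K$ for every subgroup $K \leqslant \Sn$, where these are the naive (Borel-type, trivial-universe) homotopy groups. The source $(H\bQ^c)^{\smashprod_n}$ is $\Sn$-free in the relevant sense only if we are careful, so the cleanest argument is homological: rationally, $\pi_*^K$ of a $K$-spectrum indexed on the trivial universe is detected by the underlying non-equivariant homotopy groups together with the $K$-action (this is the splitting of rational naive $K$-spectra, or simply: for a finite group $K$, rational Borel homology is a summand and the Tate part vanishes rationally on bounded-below objects — but more elementarily, a map of bounded-below rational $K$-spectra on the trivial universe is a $\pi_*^K$-equivalence for all $K$ iff it is a non-equivariant $\pi_*$-equivalence, because taking $K$-fixed points of a naive rational $K$-spectrum is exact and computed from the underlying groups with their $\bQ[K]$-module structure, and $\bQ[K]$ is semisimple). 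Since $\mu$ is a non-equivariant $\pi_*$-isomorphism (it models the multiplication of $H\bQ$, which on homotopy is the identity $\bQ \to \bQ$ in degree $0$), it is a $\pi_*^K$-isomorphism for all $K \leqslant \Sn$, hence a $\pi_*$-isomorphism of $\Sn$-spectra.

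For the ``furthermore'' clause, I would inflate along $p_{\GSn}\co \GSn \to \Sn$ (or rather note that the $G$-action on everything in sight is trivial, so we are inflating a $\Sn$-spectrum to a $\GSn$-spectrum via the projection $G \times \Sn \to \Sn$) and invoke the remark in the excerpt: when $\GSnSp$ is indexed on a $\Sn$-fixed universe (the inflation of a complete $G$-universe), the inflation functor is just the space-level inflation applied levelwise, and a weak equivalence of $\GSnSp$ is detected on $\Phi^\Gamma$ or equivalently $\pi_*^\Gamma$ for all subgroups $\Gamma$. For $\Gamma \leqslant \GSn$ with image $p_G(\Gamma) = J \leqslant G$ and $p_{\Sn}(\Gamma) = K \leqslant \Sn$, the $\Gamma$-fixed (or geometric fixed) points of an inflated spectrum only see the $K$-structure and the $J$-action is trivial, so $\pi_*^\Gamma$ of the inflation reduces to a computation in terms of $\pi_*^K$ of the original $\Sn$-spectrum; since $\mu$ is a $\pi_*^K$-isomorphism for all $K$, its inflation is a $\pi_*^\Gamma$-isomorphism for all $\Gamma$, i.e. a weak equivalence of $\GSnSp$. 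The main obstacle I anticipate is not any single hard step but getting the equivariant bookkeeping exactly right: ensuring the chosen commutative ring model of $S_\bQ$ really does give a strictly $\Sn$-equivariant multiplication with the correct (trivial) target action, and pinning down precisely which homotopy-group functors ``$\pi_*$-isomorphism of $\Sn$-spectra'' and ``weak equivalence of $\GSnSp$'' refer to, so that the reduction ``non-equivariant rational equivalence $\Rightarrow$ equivariant equivalence on the trivial/inflated universe'' is applied legitimately (this uses semisimplicity of $\bQ[K]$ in an essential way and would be false integrally).
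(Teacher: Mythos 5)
Your proposal takes a genuinely different route from the paper, and the route has a real gap at exactly the step you flag as needing care.

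The paper works directly with the telescope presentation of $S_\bQ$: at each finite stage the vertical map $(S^0)^{\wedge n} \to S^0$ is the $n$-fold multiplication of the sphere, which is an honest \emph{isomorphism} of $\Sn$-spectra (it is the unit isomorphism $\bS^{\wedge n}\cong\bS$, which conjugates the permutation action to the trivial one). Taking homotopy colimits of a level-wise isomorphism of telescopes then gives a weak equivalence of $\Sn$-spectra for free, with no homotopical computation of fixed points required. Your version replaces $S_\bQ$ by a cofibrant commutative model $H\bQ^c$ and appeals to the $\Sn$-equivariance of its $n$-fold multiplication; but now $(H\bQ^c)^{\wedge n}\to H\bQ^c$ is not an isomorphism, and you must prove it is a $\pi_*^K$-isomorphism for all $K\leqslant\Sn$ in the trivial-universe (all-subgroups) model structure. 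The reduction you propose for this --- ``a map of bounded-below rational $K$-spectra on the trivial universe is a $\pi_*^K$-equivalence for all $K$ iff it is a non-equivariant $\pi_*$-equivalence, because $K$-fixed points are computed from the underlying homotopy groups with their $\bQ[K]$-module structure'' --- is false, even rationally and even for bounded-below objects. The genuine fixed-points functor on trivial-universe orthogonal $K$-spectra does not factor through the underlying spectrum with $K$-action: for example, take $K=C_2$, $X=\Sigma^\infty_+EC_2$ and $Y=S^0$ with trivial action, and $f\colon X\to Y$ the collapse. Then $f$ is a (rational) non-equivariant $\pi_*$-isomorphism, both sides have the same underlying rational homotopy groups as a (trivial) $\bQ[C_2]$-module, yet $\pi_*^{C_2}(X)=\colim_n\pi_{*+n}\big((EC_{2+}\wedge S^n)^{C_2}\big)=0$ while $\pi_*^{C_2}(Y)=\pi_*(S^0)\ne 0$. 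So semisimplicity of $\bQ[K]$ is not the relevant issue; what fails is the implicit assumption that categorical fixed points on the trivial universe are controlled by $\pi_*$ with its $K$-action.

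Consequently the core homotopical step of your argument is unproven: you would need a direct computation of the derived $K$-fixed points of $(H\bQ^c)^{\wedge n}$ on the trivial universe, which is substantially harder than what the paper does. The same issue propagates to the ``furthermore'' clause, since weak equivalences of $\GSnSp$ there also test all subgroups $\Gamma\leqslant\GSn$, not just the ones seeing the underlying non-equivariant homotopy type. The ingredients you identify (commutativity of the multiplication, the formula $(p_{\Sn}^*A)^\Gamma=A^{p_{\Sn}\Gamma}$ for the inflation, and the positivity bookkeeping) are all fine; it is the claimed equivalence between non-equivariant and trivial-universe equivariant rational $\pi_*$-isomorphisms that needs to be removed and replaced, and the cleanest replacement is exactly the telescope/unit-isomorphism argument the paper uses.
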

\begin{proof}
Consider the commutative diagram in $\ho (\SnSp)$:
\[
\xymatrix{
(S^0)^{\sm n} \ar[d]^\cong \ar[r]^{2^{\sm n}} &
(S^0)^{\sm n} \ar[d]^\cong \ar[r]^{3^{\sm n}} &
(S^0)^{\sm n} \ar[d]^\cong \ar[r]^{4^{\sm n}} &
(S^0)^{\sm n} \ar[d]^\cong \ar[r]^{5^{\sm n}} &
\dots \\
S^0 \ar[r]^{2^n} &
S^0 \ar[r]^{3^n} &
S^0 \ar[r]^{4^n} &
S^0 \ar[r]^{5^n} &
\dots
}
\]
the vertical maps are the commutative multiplication operation of $S^0$
and hence are $\Sn$--equivariant.
Taking homotopy colimits we obtain the desired weak equivalence of $\SnSp$:
$S_\bQ^{\smashprod_n} \lra S_\bQ$.

The second statement follows since for any $\Sn$-spectrum $A$
$(p_{\Sn}^* A)^\Gamma = A^{p_{\Sn} \Gamma}$,
where $p_{\Sn} \Gamma$ is the image of $\Gamma$ under the projection to $\Sn$.
\end{proof}

\begin{theorem}\label{thm:commGsp}
There is a cofibrantly generated model structure on
$\Comm$-algebras in $G\endash \mathrm{Sp}$ where the weak equivalences
are those maps which forget to rational equivalences of $G$-spectra.
\end{theorem}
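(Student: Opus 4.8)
The plan is to apply Kan's lifting criterion, Lemma \ref{lem:liftedmodel}, to the adjunction $(F_{\Comm}, U)$ between $G\endash\mathrm{Sp}$ (with the rational positive stable model structure) and $\algin{\Comm}{G\endash\mathrm{Sp}}$. The first hypothesis of that lemma --- that $U$ preserves filtered colimits --- is standard, since the free commutative algebra functor is built from smash powers and orbits, which commute with filtered colimits. So the whole content is the second hypothesis: every transfinite composition of pushouts of maps $F_{\Comm}(j)$, for $j$ a generating acyclic cofibration of the rational positive stable model structure, must forget to a rational equivalence. As in the proof of Theorem \ref{thm:algebramodelstructure}, this breaks into three steps: (Step 1) if $f\co X\to Y$ is a positive cofibration between positive cofibrant $G$-spectra that is a rational equivalence, then $F_{\Comm}^n f\co (X^{\sm n})/\Sn\to (Y^{\sm n})/\Sn$ is a rational equivalence and an $h$-cofibration on underlying spectra; (Step 2) pushouts of such maps are again rational equivalences and $h$-cofibrations, using the Harper--Hess filtration \cite[Proposition 5.10]{harperhess13} of a pushout of free algebras; (Step 3) sequential colimits of such maps are rational equivalences, which is routine since smashing with $S_\bQ$ commutes with sequential colimits. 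The Cofibration Hypothesis (pushouts, sequential colimits, $h$-cofibrations) is handled exactly as before by Harper--Hess.

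The real work is Step 1, and here the approach \emph{must} differ from Section \ref{sec:isotropic}, since $\Comm$ uses the family of \emph{all} subgroups of $\GSn$, so the orbit functor $(-)/\Sn$ is not left Quillen from a convenient $\mcF$-model structure in the way Lemma \ref{lem:quillenorbit} exploited. Instead I would use Lemma \ref{lem:mult}: the rational sphere $S_\bQ$ carries a $\Sn$-equivariant multiplication $S_\bQ^{\sm n}\to S_\bQ$ that is a weak equivalence of $\Sn$-spectra, and remains one after inflation to $\GSnSp$. The idea is to show that for a positive cofibrant $G$-spectrum $X$, smashing $X^{\sm n}$ with $E=S_\bQ$ and then taking $\Sn$-orbits is (rationally) insensitive to the $\Sn$-action: more precisely, one shows $(X^{\sm n}\sm S_\bQ)/\Sn$ receives a map from, or maps to, something of the homotopy type of $X_\bQ^{\sm n}$ (no orbits), using the multiplication to collapse the $n$-fold smash of $S_\bQ$. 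Combined with Lemma \ref{lem:orbitsandE} --- which identifies $(A\sm B)/\Sn$ with $(A/\Sn)\sm B$ for $B$ a $G$-spectrum --- this should reduce the rational equivalence $F_{\Comm}^n f\sm S_\bQ$ to the statement that $f_\bQ^{\sm n}$ is a rational equivalence, which follows because the smash product of rational equivalences between cofibrant objects is a rational equivalence.

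Concretely, I expect the key comparison to run: $F_{\Comm}^n f$ is a rational equivalence iff $(F_{\Comm}^n f)\sm S_\bQ$ is a $\pi_*$-isomorphism, and by Lemma \ref{lem:orbitsandE} this is $\big((X^{\sm n}\sm S_\bQ)/\Sn \to (Y^{\sm n}\sm S_\bQ)/\Sn\big)$; now one uses the $\Sn$-equivariant equivalence $S_\bQ^{\sm n}\xrightarrow{\sim} S_\bQ$ together with $X^{\sm n}\sm S_\bQ^{\sm n}\cong (X\sm S_\bQ)^{\sm n}$ as $\Sn$-spectra to rewrite the source, up to $\Sn$-equivariant weak equivalence, as $(X\sm S_\bQ)^{\sm n}/\Sn$ with the \emph{permutation} action --- and then the point is that rationally the orbits and homotopy orbits of a free-ish permutation action agree, or more simply that smashing with the (inflated, hence weakly contractible-on-$\Sn$-fixed-points issues aside) sphere kills the difference. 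The cleanest route is probably: $X$ positive cofibrant implies $X^{\sm n}$ is positive $\Sn$-cofibrant (Harper--Hess), hence $(X\sm S_\bQ)^{\sm n}$ is a cofibrant $\Sn$-spectrum on which, after rationalisation, the $\Sn$-action is free up to homotopy in the relevant sense, so $(-)/\Sn$ computes the homotopy orbits and preserves the equivalence $f^{\sm n}\sm S_\bQ$.

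The main obstacle is precisely establishing Step 1 without the $\mcF$-model-structure crutch: one needs that $(-)/\Sn$ applied to $X^{\sm n}\sm S_\bQ$ is homotopically well-behaved even though $\Comm$'s family contains subgroups like $\{e\}\times K$ on which ordinary orbits are badly behaved. Everything hinges on the rational multiplication of Lemma \ref{lem:mult} trivialising the obstruction --- intuitively, rationally there are "no norms", so the naive orbits already compute the derived functor --- and making this precise, e.g. by a careful filtration argument on $(X\sm S_\bQ)^{\sm n}$ or by invoking that $H\bQ$-modules have a projectivity property that makes $(-)/\Sn$ exact up to homotopy, is the delicate point. Once Step 1 is in hand, Steps 2 and 3 and the verification of the Cofibration Hypothesis are formal and follow the template of \cite[Proposition 5.13]{mmss01} and \cite{harperhess13} as in Theorem \ref{thm:algebramodelstructure}.
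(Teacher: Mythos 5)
Your high-level strategy matches the paper's: the three-step decomposition, the Cofibration Hypothesis via Harper--Hess, the use of Lemma \ref{lem:mult} to collapse $S_\bQ^{\sm n}\to S_\bQ$, and Lemma \ref{lem:orbitsandE} to move $S_\bQ$ past $(-)/\Sn$. The problem is Step 1, and you are honest that you have left its key point as a handwave (``the delicate point''). Your proposed framing --- that rationally the $\Sn$-action on $X^{\sm n}\sm S_\bQ$ is ``free up to homotopy in the relevant sense'' so strict orbits compute homotopy orbits --- is not the right mechanism and will not be easy to make rigorous as stated: the failure of $(-)/\Sn$ to be derived is not fixed by inverting primes, and cofibrancy of $X^{\sm n}$ in $\GSnSp$ (which you correctly extract from Harper--Hess) does not by itself make $(-)/\Sn$ compute a homotopy orbit construction for the full family of subgroups of $\GSn$.

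The ingredient you are missing, and which the paper uses to fill exactly this gap, is the comparison of strict $\Sn$-orbits of a smash power with homotopy orbits relative to the family $\mcF_n^G$: for $X$ a cofibrant $G$-spectrum, the natural map
\[
(E_G\Sn)_+\sm_{\Sn}X^{\sm n}\longrightarrow X^{\sm n}/\Sn
\]
is a weak equivalence of $G$-spectra (\cite[Proposition B.117]{HillHopkinsRavenel}, \cite[Lemma IV.8.4]{mm02}). This reduces the problem to the $\mcF_n^G$-model structure, where Lemma \ref{lem:quillenorbit} applies directly. The paper then runs the zig-zag
\[
\begin{array}{rcl}
((E_G\Sn)_+\sm_{\Sn}X^{\sm n})\sm S_\bQ
&\cong& ((E_G\Sn)_+\sm X^{\sm n}\sm S_\bQ)/\Sn\\
&\longleftarrow& ((E_G\Sn)_+\sm X^{\sm n}\sm S_\bQ^{\sm n})/\Sn\\
&\cong& ((E_G\Sn)_+\sm(X\sm S_\bQ)^{\sm n})/\Sn,
\end{array}
\]
with the middle map a weak equivalence by Lemma \ref{lem:quillenorbit}, and this is precisely where Lemma \ref{lem:mult} does its work --- not to ``free up'' the action rationally, but to redistribute a single $S_\bQ$ into $n$ copies so that the smash power lands on the rationalised $X\sm S_\bQ$. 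After this rewriting, the statement reduces to $(f\sm S_\bQ)^{\sm n}$ inducing an $\mcF_n^G$-equivalence after smashing with $(E_G\Sn)_+$, which is straightforward because $f\sm S_\bQ$ is a genuine weak equivalence of cofibrant $G$-spectra. So: your outline is directionally correct, but you need the HHR/MM comparison to $(E_G\Sn)_+$-homotopy-orbits as the concrete replacement for the vague freeness claim; without it Step 1 does not close.
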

\begin{proof}
The proof is very similar to the case of a general operad.
The main changes are in the three steps used to show that
every map built using pushouts (cobase extensions) and
sequential colimits from $F_\mcO J_\bQ$ (maps of the form $F_\mcO j$, for $j$ a generating
cofibration of the rational positive stable model structure on $\GSp$) is a rational equivalence.

\begin{description}
\item[Step 1] If $f \co X \to Y$ is a
cofibration of cofibrant $G$-spectra that is a rational equivalence, then
$F_\Comm f$ is a rational equivalence and an $h$-cofibration of underlying $G$-spectra.

\item[Step 2] Any pushout (cobase extension) of
$F_\Comm f$ is a rational equivalence and an $h$-cofibration of underlying $G$-spectra.

\item[Step 3] Any sequential colimit of such maps is a rational equivalence.
\end{description}

The first step is a difficult one, we start by comparing $\Comm$ to the $E_\infty^G$
(this is the same method as is used to show that there is
a model structure on $\Comm$-algebras in $\GSp$).
If $X$ is a cofibrant $G$-spectrum then the map
\[
(E_G \Sn)_+ \sm_{\Sn} X^{\smashprod_n}  \lra
X^{\smashprod_n}/\Sn
\]
is a (natural) weak equivalence of $\GSn$--spectra
by \cite[Proposition B.117]{HillHopkinsRavenel}
or \cite[Lemma 8.4]{mm02}.
Here $(E_G \Sn)_+$ is the universal space for the family $\mcF_n^G$.
Hence it suffices to show that if $f \co X \to Y$ is a cofibration of cofibrant
$G$-spectra that is a rational equivalence, then
\[
(E_G \Sn)_+ \sm_{\Sn} f^{\smashprod_n} \co
(E_G \Sn)_+ \sm_{\Sn} X^{\smashprod_n} \lra
(E_G \Sn)_+ \sm_{\Sn} Y^{\smashprod_n}.
\]
is a rational equivalence.

By Lemma \ref{lem:mult} there is a $\GSn$-equivariant
map $S_\bQ^{\smashprod_n} \to S_\bQ$ that is a
weak equivalence of $\GSn$-spectra.
We use this to obtain the following series of maps of
$G$-spectra
\[
\begin{array}{rcl}
((E_G \Sn)_+ \sm_{\Sn} X^{\smashprod_n}) \smashprod S_\bQ
& \overset{\cong}{\lra} &
((E_G \Sn)_+ \sm X^{\smashprod_n} \smashprod S_\bQ)/\Sn \\
& \longleftarrow &
((E_G \Sn)_+ \sm X^{\smashprod_n} \smashprod S_\bQ^{\smashprod_n})/\Sn \\
& \overset{\cong}{\lra} &
((E_G \Sn)_+ \sm (X \smashprod S_\bQ)^{\smashprod_n})/\Sn. \\
\end{array}
\]
The second map is a weak equivalence by Lemma \ref{lem:quillenorbit},
hence we have a zig-zag of weak equivalences of $G$-spectra.
Since $X \to Y$ is a rational equivalence, it follows that
\[
((E_G \Sn)_+ \sm (X \smashprod S_\bQ)^{\smashprod_n})/\Sn
\lra
((E_G \Sn)_+ \sm (Y \smashprod S_\bQ)^{\smashprod_n})/\Sn
\]
is a  weak equivalence of $G$-spectra.
Hence the map
$X^{\smashprod_n}/\Sn \to Y^{\smashprod_n}/\Sn$
is a rational equivalence of $G$-spectra.

The remaining steps hold just as before.
\end{proof}

The same strategy gives a related result for $\Ninfty$-operads in place of~$\Comm$.  
\begin{corollary}
For an $\Ninfty$--operad $\mcO$, 
there is a cofibrantly generated model structure on
$\mcO$-algebras in $G\endash \mathrm{Sp}$ where the weak equivalences
are those maps which forget to rational equivalences of $G$-spectra.
\end{corollary}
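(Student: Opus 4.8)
The plan is to imitate the proof of Theorem~\ref{thm:commGsp} almost verbatim, using Lemma~\ref{lem:liftedmodel} to right-lift the rational positive stable model structure on $\GSp$ along the forgetful functor $U\co\algin{\mcO}{\GSp}\to\GSp$. The Cofibration Hypothesis (the interaction of pushouts, sequential colimits and $h$-cofibrations, together with $U$ preserving filtered colimits) is established exactly as before by expressing pushouts of $\mcO$-algebras as sequential colimits in $\GSp$ via the Harper--Hess filtration \cite[Proposition 5.10]{harperhess13}; this argument is insensitive to the choice of operad (among those whose spaces have CW homotopy type) and to the class of weak equivalences. The content is therefore the second hypothesis of Lemma~\ref{lem:liftedmodel}: every map built from $F_\mcO J_\bQ$ (free maps on the generating cofibrations of the rational positive stable model structure) by pushouts and sequential colimits is a rational equivalence. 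As in Theorem~\ref{thm:commGsp} this breaks into three steps, and Steps~2 and~3 transfer word for word --- Step~2 by rewriting the pushout in the Harper--Hess manner and checking each filtration stage, Step~3 because $-\smashprod S_\bQ$ commutes with sequential colimits of $G$-spectra --- so the only thing requiring a genuinely new (but short) argument is Step~1.

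For Step~1 I would take $f\co X\to Y$ a cofibration of positive cofibrant $G$-spectra which is a rational equivalence and show that $\mcO(n)_+\smashprod_{\Sn}f^{\smashprod n}$ is a rational equivalence (the fact that it is also an $h$-cofibration of underlying $G$-spectra being standard). The essential simplification relative to the $\Comm$ case is that, since $\Sn$ acts freely on $\mcO(n)$, there is no need for the preliminary comparison of $X^{\smashprod n}/\Sn$ with $(E_G\Sn)_+\smashprod_{\Sn}X^{\smashprod n}$: the object $\mcO(n)_+\smashprod_{\Sn}X^{\smashprod n}$ already has the correct homotopy type, and the operad plays the role that $(E_G\Sn)_+$ played there. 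Combining Lemma~\ref{lem:orbitsandE} with the $\GSn$-equivariant multiplication $S_\bQ^{\smashprod n}\lra S_\bQ$ of Lemma~\ref{lem:mult} produces, exactly as in the proof of Theorem~\ref{thm:commGsp}, a zig-zag of $G$-spectra
\[
(\mcO(n)_+\smashprod_{\Sn}X^{\smashprod n})\smashprod S_\bQ
\ \overset{\cong}{\lra}\
(\mcO(n)_+\smashprod X^{\smashprod n}\smashprod S_\bQ)/\Sn
\ \longleftarrow\
(\mcO(n)_+\smashprod X^{\smashprod n}\smashprod S_\bQ^{\smashprod n})/\Sn
\ \overset{\cong}{\lra}\
(\mcO(n)_+\smashprod(X\smashprod S_\bQ)^{\smashprod n})/\Sn ,
\]
naturally in $X$ and hence compatibly with $f$.

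The step I expect to be the real work is verifying that the backward map above is a weak equivalence and that $f$ induces a weak equivalence on the rightmost terms; both reduce to an application of Lemma~\ref{lem:quillenorbit} with the family $\mcF=\mcF_n(\mcO)$, which is legitimate precisely because for an $\Ninfty$-operad one has $H\times\{e\}\in\mcF_n(\mcO)$ for every $H\leqslant G$. Concretely: $X^{\smashprod n}$ is positive $\GSn$-cofibrant by the Harper--Hess filtration, $S_\bQ$ and $S_\bQ^{\smashprod n}$ inflated to $\GSnSp$ are positive cofibrant (Lemma~\ref{lem:quillenorbit} and Lemma~\ref{lem:mult}), and smashing these with $\mcO(n)_+$ yields $\mcF_n(\mcO)$-cofibrant $\GSn$-spectra by \cite[Lemma IV.6.6]{mm02}; since $S_\bQ^{\smashprod n}\lra S_\bQ$ and (using that $f$ is a rational equivalence) $(X\smashprod S_\bQ)^{\smashprod n}\lra(Y\smashprod S_\bQ)^{\smashprod n}$ are $\mcF_n(\mcO)$-equivalences of $\mcF_n(\mcO)$-cofibrant objects after smashing with $\mcO(n)_+$, Lemma~\ref{lem:quillenorbit} makes $(-)/\Sn$ send them to weak equivalences of $G$-spectra. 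Chasing the zig-zag then gives that $\mcO(n)_+\smashprod_{\Sn}f^{\smashprod n}$ becomes a weak equivalence after smashing with $S_\bQ$, i.e. is a rational equivalence, which completes Step~1 and with it the verification of the hypotheses of Lemma~\ref{lem:liftedmodel}.
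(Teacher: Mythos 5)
Your proposal is correct and is precisely what the paper means by ``the same strategy'': you follow the proof of Theorem~\ref{thm:commGsp}, with the right simplification that for an $\Ninfty$-operad the preliminary comparison with $(E_G\Sn)_+$ is unnecessary since $\mcO(n)$ is already a universal space for $\mcF_n(\mcO)$ with free $\Sn$-action, so $\mcO(n)_+\smashprod_{\Sn}X^{\smashprod n}$ is already the homotopically correct object. The applications of Lemmas~\ref{lem:orbitsandE}, \ref{lem:mult} and~\ref{lem:quillenorbit} (with $\mcF=\mcF_n(\mcO)$, legitimate because $\Ninfty$-operads have $H\times\{e\}\in\mcF_n(\mcO)$) are all as the paper intends.
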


\end{document}